\newdimen\bibspace
\renewenvironment{thebibliography}[1]{%
 \section*{\refname %or \bibname if you use ``book'' as the documentclass
       \@mkboth{\MakeUppercase\refname}{\MakeUppercase\refname}}%
     \list{\@biblabel{\@arabic\c@enumiv}}%
          {\settowidth\labelwidth{\@biblabel{#1}}%
           \leftmargin\labelwidth
           \advance\leftmargin\labelsep
           \itemsep\bibspace
           \parsep\z@skip     %
           \@openbib@code
           \usecounter{enumiv}%
           \let\p@enumiv\@empty
           \renewcommand\theenumiv{\@arabic\c@enumiv}}%
     \sloppy\clubpenalty4000\widowpenalty4000%
     \sfcode`\.\@m}
    {\def\@noitemerr
      {\@latex@warning{Empty `thebibliography' environment}}%
     \endlist}
\newtheorem{thm}{Theorem}[section]
\newtheorem{lem}[thm]{Lemma}
\newtheorem{prop}[thm]{Proposition}
\newtheorem{defn}[thm]{Definition}
\newtheorem{rem}[thm]{Remark}
\newtheorem*{rem*}{Remark}
\numberwithin{equation}{section}
\def\XXint#1#2#3{{\setbox0=\hbox{$#1{#2#3}{\int}$}
  \vcenter{\hbox{$#2#3$}}\kern-.5\wd0}}
           \newcommand{\ud}{\mathrm{d}}
\newcommand{\be}{\begin{equation}}      \newcommand{\ee}{\end{equation}}
\begin{document}

\title{\textbf{On a Class of Nonlocal SIR Models}
\bigskip}

\author{Li Guan,\quad Dong Li \footnote{DL is partially supported by Hong Kong RGC grant GRF 16307317.}, \quad  Ke Wang\footnote{KW is partially supported by HKUST Initiation Grant IGN16SC05.}, \quad
Kun Zhao\footnote{KZ is partially supported by Simons Foundation Collaboration Grant for Mathematicians 413028.}}

\date{\today}

\maketitle

\begin{abstract}
We revisit the classic Susceptible-Infected-Recovered (SIR) epidemic model and
one of its nonlocal variations recently developed in \cite{Guan}. We introduce several
new approaches to derive exact analytical solutions in the classical situation and 
analyze the corresponding effective approximations in the nonlocal setting. 
An interesting new feature of the nonlocal models, compared with the classic SIR model,
is the appearance of multiple peak solutions for the infected population. We provide
several rigorous results on the existence and non-existence of peak solutions with sharp
asymptotics.

 %give a new derivation
%of  exact analytical solutions. 

%By exploiting a similar idea we formulate a class of new hybrid SIR models 
%and derive the analytic expressions of solutions. %We showcase the sharpness of the results and study qualitative properties of the solutions by numerical simulations.

\end{abstract}

%\tableofcontents

\section{Introduction}

This paper is oriented around the classic SIR model in mathematical epidemiology and one of its nonlocal variations recently developed in \cite{Guan}. The main purpose is to provide a systematic approach for finding exact analytical solutions to the models under appropriate initial conditions. 
%\subsection{Background}
The susceptible-infected-recovered (SIR) model has been a cornerstone in the study of the spreading mechanisms of infectious diseases for many decades since its initiation in the 1920s \cite{KK27}. The success of the SIR model has been a consequence of its intuitive simplicity, analytical tractability, and ability to predict the underlying mechanisms elucidating the spread of infectious diseases. In its simplest form, the classic SIR model is a system of nonlinear ordinary differential equations:
%, which reads
\begin{align}\label{e0}
%\begin{cases}
\dfrac{dS}{dt}&=-\lambda \frac{I}{N}S, \notag\\
\dfrac{dI}{dt}&=\lambda \frac{I}{N}S-\gamma I, \\
\dfrac{dR}{dt}&=\gamma I, \notag
%\end{cases}
\end{align}
where $S$, $I$, $R$ and $N=S+R+I$ denote the susceptible, infected, recovered and total populations, respectively; $\lambda>0$ is the total transmission rate, and $\gamma>0$ is the average recovery rate. One
should note that due to the conservation law $S+I+R\equiv N$ the degree of freedom in \eqref{e1}
is in fact two instead of three. By a normalization of $\frac{S}{N}\to S$, $\frac{I}{N}\to I$, $\frac{R}{N}\to R$, the system \eqref{e0} becomes
\begin{align}\label{e1}
%\begin{cases}
\dfrac{dS}{dt} &=-\lambda IS, \notag\\
\dfrac{dI}{dt} &=\lambda IS-\gamma I,\\
\dfrac{dR}{dt} &=\gamma I, \notag
%\end{cases}
\end{align}
and the  conservation law becomes $S+I+R\equiv 1$. Despite its simple looking structure, the nonlinear interaction between the susceptible and infected populations makes the investigation of some of the fundamental mathematical properties of the model a daunting task. In particular, the search of closed form (explicit) solutions to the model subject to appropriate initial conditions has been a significant challenge in the community of mathematical epidemiology, yet the resolution is still elusive. To the authors' knowledge, the implicit solution discovered by Harko {\it et al} in 2014 \cite{HLM} is the first exact analytical solution to the model since the 1920s. Although explicit solutions are still not available, the implicit solution can be charted in a systematic way such that it can be applied in various situations for practical purposes.  An exact solution of a special case of the SIR model was presented earlier in \cite{SKS}.  The SIR model has also been investigated numerically in a number of works such as \cite{BB, JB, RDG, RDG2}. A stochastic epidemic-type model was recently studied in \cite{WMM}.  Lyapunov functions for classic SIR and SIS epidemiological models were introduced
in \cite{KW}, and a new one later in \cite{OKK}. For reaction-diffusion type models
one can see recent \cite{Wang18, Lou1, Lou2} and the references therein for more detailed
discussions.

%The approach used for finding the implicit solution in \cite{HLM} can be summarized as follows.  
%\begin{itemize}
%\item 
%\end{itemize}

\subsection{Motivation and Goal}

Now we would like to point out the facts that motivate the current work. Recently in \cite{Guan}, the authors proposed an alternative version of the classic SIR model by replacing the exponential function in the integral form of the infected population by rewriting the constant recovery rate as the hazard function of the exponential function, due to their equivalence, and then replacing the hazard function  of exponential function by other types of hazard functions. The hybrid differential-integral model is more realistic than the classic SIR model in the sense that the former one replaces the constant recovery rate in the latter one by a probability density function. Indeed, when the recovery rate is constant, by replacing the nonlinear term in the second equation of \eqref{e1} by $-S'(t)$, then formally solving the equation for $I(t)$, one gets
$$
I(t)=\int_0^t e^{-\gamma(t-\tau)}[-S'(\tau)]d\tau+I_0e^{-\gamma t},
$$
where $'=\frac{d}{dt}$. The main idea of \cite{Guan} is to rewrite the exponential function as $e^{-\gamma t}=1-(1-e^{-\gamma t})$, then replace $1-e^{-\gamma t}$, which is exactly the cumulative density function of exponential distribution, by a  cumulative density function supported in $[0,\infty)$, called $G(t)$, together with the differential equation for $S$ and the conservation of total population resulting in the nonlocal (normalized) SIR model:
\begin{equation}\label{e2}
\begin{aligned}
\frac{dS}{dt}&=-\lambda IS,\\
I(t)&=\int_0^t [1-G(t-\tau)]\cdot[-S'(\tau)]d\tau+I_0[1-G(t)],\\
R&=1-I-S.
\end{aligned}
\end{equation}
We observe that once the constant recovery rate is replaced by a probability density function, the approach of \cite{HLM} (which is built on analyzing the usual Abel-type equations) becomes unaccessible for finding analytical solutions to the nonlocal model. 
Furthermore, even the construction and analysis of the corresponding approximate solutions 
seem nontrivial without deeper understanding of the nonlocal model.
These are the major facts that motivate the current work. 

The goal of this paper is to provide two alternative (more systematic) approaches for finding analytic and approximate solutions to the classic and nonlocal SIR models, \eqref{e1} $\&$ \eqref{e2}, under appropriate initial conditions. The first approach (see Sections 2 and 4), is to  regard the infected population, $I$, as a function of the susceptible one, $S$, instead of $t$, based on the observation that $S$ is a strictly decreasing function of $t$ (hence the inverse function of $S(t)$ exists). Then by solving a differential equation for $I$ in terms of $S$, one gets a closed form of $I$ in terms of $S$. 
In the nonlocal setting, one has to make certain approximations of the nonlocal kernel in order to
derive a quantitative explicit expression. In the second approach (see Section 3 and esp. Section 5),
we introduce a rescaled (nonlocal) time variable $\tau$ which is measured according to normalized instantaneous
infected population, and analyze directly the susceptible population (along with other variables) by
an approximation of the interaction kernel. All properties of the solutions are analyzed in
the $\tau$-scale. We then translate these results into the original time scale $t$ by using the nonlocal
map $t\to \tau$.  Conceptually, this approach is rather clean since it corresponds to a nonlocal stretching of the time axis under which the dynamics of the model becomes much simpler (this is
particularly transparent in the classic SIR case, see Section 3). 
On the other hand, an interesting new feature of the nonlocal model is that one
can accommodate certain infected populations with several peaks (in the classic SIR case, one can have
at most one, see Section 3). We give several rigorous constructions on the existence of such solutions
having multiple peaks (with ``controlled" centers)  in Section 5. 

%Finally by replacing $I$ in the ODE for $S$, one can solve for $S$ to get an analytical solution in terms of $t$. 

The rest of this paper is organized as follows. In Sections 2 and 3, we revisit the classic SIR model and introduce the aforementioned analyses in the classic situation. These analyses are then generalized
to the nonlocal SIR models in Sections 4 and 5. Some concluding remarks are gathered at the
end of Section 5.

\subsection*{Notation.} For any two positive quantities $X$ and $Y$, we shall use the notation
$X\lesssim Y$ if $X\le CY$ for some harmless positive constant $C$.  We shall write $X\ll Y$ 
if $X \le c Y$ for some sufficiently small constant $c>0$. The smallness of the constant $c$
is usually clear from the context.

%The rest part of the paper is organized as follows. In Sections 2 and 3, we apply the aforementioned idea to construct analytical solutions to the classical and hybrid SIR models, respectively. In Section 4, we carry out numerical experiments to visualize several analytical solutions to the hybrid SIR model by choosing appropriate cumulative density functions, and compare the solutions with that to the classical SIR model. %The paper ends with concluding remarks. 

\section{Classic SIR Model}\label{sec:SIR}

In this section, we revisit the classic SIR model which will motivate
  the nonlocal model in later sections. The classic (normalized) SIR model takes the form:
\begin{equation}\label{sir}
\begin{aligned}
\frac{dS}{dt}&=-\lambda IS,\\
\frac{dI}{dt}&=\lambda IS-\gamma I,\\
\frac{dR}{dt}&=\gamma I.
\end{aligned}
\end{equation}
We observe that since the transmission rate $\lambda$ is positive, in the biologically relevant regime where the populations are positive, the susceptible population is strictly decreasing with respect to $t$ due to the first equation in \eqref{sir}. This indicates that $S(t)$ is a one-to-one function, and hence one can define an inverse function of $S$ and express $t$ in terms of $S$. Therefore, we can regard $I$ as a function of $S$, instead of $t$. By dividing the second equation in \eqref{sir} by the first one, we obtain
\begin{equation}\label{sir1}
\begin{aligned}
\frac{dI}{dS}=\dfrac{\lambda IS-\gamma I}{-\lambda IS},
\end{aligned}
\end{equation}
from which we get
\begin{equation}\label{sir2}
\begin{aligned}
\frac{dI}{dS}=-1+\dfrac{\gamma}{\lambda S}.
\end{aligned}
\end{equation}
By solving \eqref{sir2} for $I$, we have
\begin{equation}\label{sir3}
\begin{aligned}
I(S)=-S+\dfrac{\gamma}{\lambda}\ln S+C,
\end{aligned}
\end{equation}
where 
$$
C=I_0+S_0-\frac{\gamma}{\lambda}\ln S_0.
$$
Now, by plugging \eqref{sir3} into the first equation in \eqref{sir}, we have
\begin{equation}\label{sir4}
\begin{aligned}
\frac{dS}{dt}=\lambda\left(S^2-\dfrac{\gamma }{\lambda}S\ln S-CS\right).
\end{aligned}
\end{equation}
Denote the anti-derivative of 
$$
\dfrac{1}{\lambda S^2-\gamma  S\ln S-\lambda CS}
$$
by $F(S)$. Then we deduce from \eqref{sir4} that 
\begin{equation}\label{sir5}
F(S)-F(S_0)=t,
\end{equation}
which is consistent with the analytical solution reported in \cite{HLM}. Once the susceptible population is computed from \eqref{sir5}, the infected population is then calculated from \eqref{sir3}, and $R$ is determined from the conservation of total population: $R=1-I-S$.

%%%%%%%%%%%%%%%%%%%%%%%%%%%%%%%%%%%%
%%%%%%%%%%%%%%%%%%%%%%%%%%%%%%%%%%%%

\section{New Analysis for Classic SIR Model}\label{sec:newSIR}
Consider the equations of $S$ and $I$ in \eqref{sir}
\begin{align*}
\begin{cases}
\frac d {dt}  S= -\lambda  I  S\\
\frac d {dt}  I = \lambda  I  S - \gamma  I,\\
 S|_{t=0} = S_0, \;  I|_{t=0} = I_0.
\end{cases}
\end{align*}
Note that $ S_0+  I_0 \le 1$ and $ S_0\ge 0$, $ I_0\ge 0$. Without loss of generality, we 
may assume $ I_0 >0$ since if $ I_0=0$ then $( S_0, 0)$ is a steady
state. Also we may assume $ S_0>0$ since the case $ S_0=0$ generates only
the trivial dynamics $( S(t),  I(t) ) = (0,  I_0 e^{-\gamma t})$. Now observe that
since $ I_0>0$, we have 
\begin{align*}
0< I(t)<1, \qquad \forall\, 0\le t<\infty.
\end{align*}
We now intend to \emph{linearise} the nonlinear dynamics by introducing a new (nonlinear
and nonlocal) time scale $\tau$ as
\begin{align*}
\tau (t) = \int_0^t  I(s) ds, \qquad\text{i.e.} \quad d\tau =  I dt.
\end{align*}
We then introduce functions $(S_{1}, I_1)$ such that  $ S(t) = S_1  (\tau(t))$, $ I(t) =I_1 (\tau(t) )$. Then
\begin{align*}
\begin{cases}
\frac d {d\tau} S_1 = -\lambda S_1, \\
\frac d {d\tau} I_1 = \lambda S_1-\gamma, \\
S_1 |_{\tau=0} =  S_0, \; \;I_1 |_{\tau=0} =  I_0.
\end{cases}
\end{align*}
Solving the above system yields 
\begin{align} \label{S1I1}
\begin{cases}
S_1(\tau) =  S_0 e^{-\lambda \tau}, \\
 I_1 (\tau) = S_0 +  I_0 - S_0 e^{-\lambda \tau} - \gamma \tau.
 \end{cases}
\end{align}
Observe that $I_1^{\prime\prime} <0$ and $I_1^{\prime}(0) = \lambda  S_0-\gamma$.
Now discuss two cases.

Case 1: $\lambda  S_0 > \gamma$.  In the literature this corresponds to the case where the
reproduction number is bigger than $1$. Clearly $I_1 $ will first increase and then decrease 
to zero.  
\begin{figure}[H]
  \centering
   \includegraphics[width=6cm]{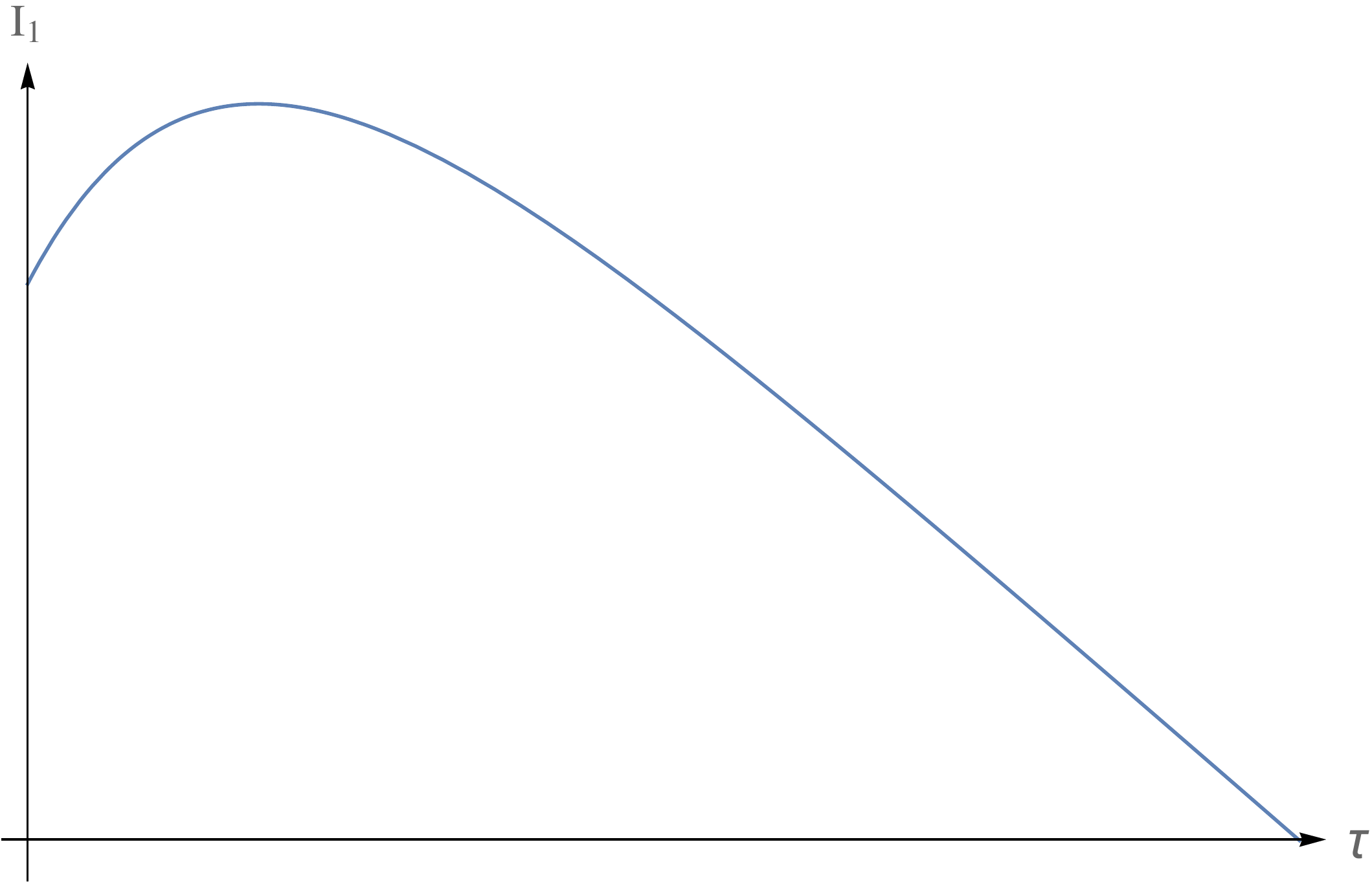}
     \caption{The function $I_1 $ in Case 1}\label{fig:1}
\end{figure}

Case 2: $\lambda  S_0\le \gamma$. This corresponds to the case where the reproduction
number is less than or equal to $1$. In this case $I_1 $ will monotonically decrease to zero.
\begin{figure}[H]
  \centering
   \includegraphics[width=6cm]{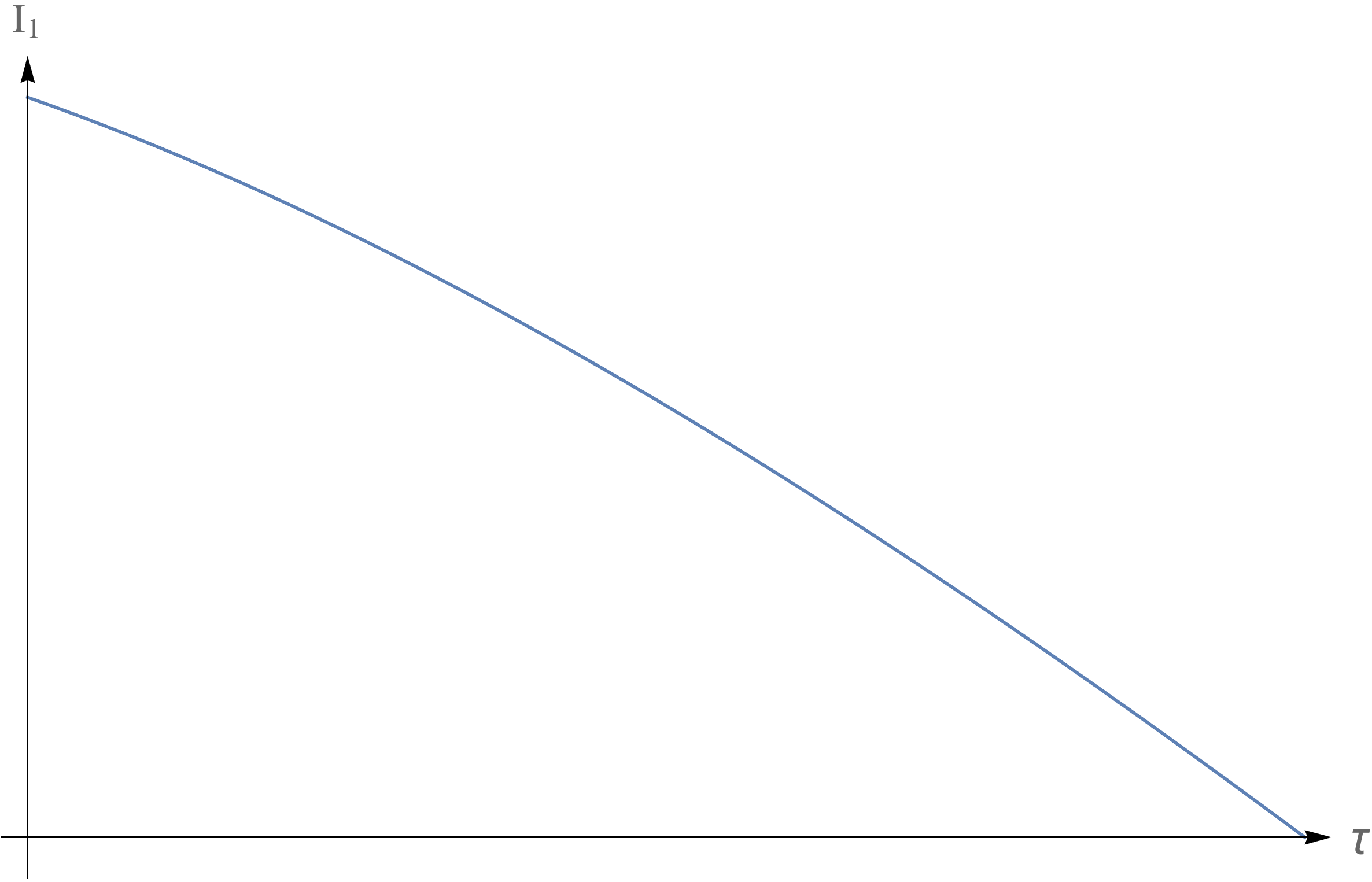}
     \caption{The function $I_1 $ in Case 2}\label{fig:2}
\end{figure}

Concluding from the above two cases, it is clear that $I_1 (\tau)$ is defined in the regime
$0\le \tau \le \tau_{\infty}$, where $\tau_{\infty}$ is the unique number solving the 
equation
\begin{align*}
 S_0 + I_0 -  S_0 e^{-\lambda \tau_{\infty}} - \gamma \tau_{\infty} =0.
\end{align*}

To recover $ S(t) =S_1(\tau(t) )$, one may first discretize the interval $[0,\tau_{\infty}]$
as $\tau_0, \tau_1,\cdots, \tau_m$, and solve
\begin{align*}
t_j = \int_0^{\tau_j} \frac 1 {{I_1} (s) } ds, \quad j=0,\cdots, m.
\end{align*}
Since $ S(t_j) = S_1(\tau_j)$ and $ I (t_j) = I_1 (\tau_j)$, the dynamics of $( S,
 I)$ is then fully recovered. We should emphasize that through the rescaled
time ``clock" $\tau$, the dynamics for the original SIR model is fully captured by
\eqref{S1I1}, and the map $\tau \to t$ simply provides the abscissa of the original SIR variables.

\begin{rem}
Similar analysis can be conducted on the classic SIS (cf. \cite{H76}) epidemic model:
\begin{align*}
\begin{cases}
\frac {dS}{dt} = -\frac {\lambda}  N S I + (\mu+\gamma)I,\\
\frac {dI} {dt} = \frac {\lambda}{N} SI - (\mu+ \gamma)I,
\end{cases}
\end{align*}
where $\lambda>0$, $\gamma>0$, $\mu\ge 0$ correspond to total transmission rate, average recovery rate and average death rate, respectively. Due to the conservation of $S(t)+I(t) \equiv N$, it suffices to study
only the dynamics of $S$. In terms of the rescaled variable $\tilde S= S/N$ and the rescaled
``clock" $d\tau =\tilde I dt$ ($\tilde I=I/N$), one has
\begin{align*}
\frac {d \tilde S} {d\tau} = - \lambda  \tilde S + \mu+\gamma
\end{align*}
which yields
\begin{align*}
&\tilde S (\tau) = \frac {\mu+\gamma}{\lambda} (1-e^{-\lambda \tau} ) + \tilde S_0 e^{-\lambda \tau},\\
&\tilde I (\tau)= 1-\frac{\mu+\gamma}{\lambda} +e^{-\lambda \tau}
 (\frac {\mu+\gamma}{\lambda} - \tilde S_0).
\end{align*}
One can then determine $\tau$ by using the relation $d\tau =\tilde I dt$. We omit the 
details. We should remark that alternatively one can regard $\tilde S$ 
as a function of $\tilde I$ and solve the ODE directly. 

\end{rem}

\section{Nonlocal SIR Model}\label{sec:hsir}

In this section, we use the idea in Section \ref{sec:SIR} to derive analytical solutions to the 
(normalized) nonlocal differential-integral SIR model:
\begin{equation}\label{hsir}
\begin{aligned}
\frac{dS}{dt}&=-\lambda IS,\\
I(t)&=\int_0^t [1-G(t-\tau)]\cdot[-S'(\tau)]d\tau+I_0[1-G(t)],\\
R&=1-I-S,
\end{aligned}
\end{equation}
which becomes the classic normalized SIR model when $G(t)=1-e^{-\gamma t}$. Note that the integral equation for $I$ can be written as
\begin{equation}\label{hsir1}
\begin{aligned}
I(t)&=S_0-S(t)+\int_0^t G(t-\tau)S'(\tau)d\tau+I_0[1-G(t)]\\
&=1-S(t)+\int_0^t G(t-\tau)S'(\tau)d\tau-I_0G(t)\quad\mbox{by recalling } R_0=0.
\end{aligned}
\end{equation}
By regarding $t$ as a function of $S$, we have an alternative expression for $I$:
\begin{equation}\label{hsir2}
\begin{aligned}
I(S)=1-S+\int_{S_0}^{S} \mathbb{\tilde G}(S, \mathbb{S})d\mathbb{S}-I_0\mathbb{G}(S),
\end{aligned}
\end{equation}
which is a nonlocal equation with respect to $S$. Hence, one can postulate various conditions on $\mathbb{\tilde G}$ to generate different kinds of SIR models, depending on the specific biological environments under consideration. To proceed we consider the following \emph{convolution}-type
kernel
\begin{align*}
\mathbb{\tilde G}(S,\mathbb{S}) = \mathbb G (S-\mathbb S),
\end{align*}
and \eqref{hsir2} becomes (after a change of variable)
\begin{align}\label{hsir2a}
\begin{aligned}
I(S)=1-S+\int_{0}^{S-S_0} \mathbb{ G}(\mathbb{S})d\mathbb{S}-I_0\mathbb{G}(S),
\end{aligned}
\end{align}

Differentiating \eqref{hsir2a} with respect to $S$, we obtain
\begin{equation}\label{hsir3}
\begin{aligned}
\frac{dI}{dS}%&=-1+\mathbb{G}(0)+\int_{S_0}^S\frac{d\mathbb{G}(S-\mathbb{S})}{dS}d\mathbb{S}-I_0\mathbb{G}'(S)\\
%&=-1+\mathbb{G}(0)-\int_{S_0}^S\frac{d\mathbb{G}(S-\mathbb{S})}{d\mathbb{S}}d\mathbb{S}-I_0\mathbb{G}'(S)\\
&=-1+\mathbb{G}(S-S_0)-I_0\mathbb{G}'(S).
\end{aligned}
\end{equation}
In view of \eqref{sir2} we see that when the kernel $\mathbb{G}$ satisfies
\begin{equation}\label{hsir4}
\mathbb{G}(S-S_0)-I_0\mathbb{G}'(S)=\frac{\gamma }{\lambda S},
\end{equation}
the following hybrid model
\begin{equation}\label{hsir5}
\begin{aligned}
\frac{dS}{dt}&=-\lambda IS,\\
I(S)&=1-S+\int_{S_0}^{S} \mathbb{G}(S-\mathbb{S})d\mathbb{S}-I_0\mathbb{G}(S),\\
R&=1-I-S
\end{aligned}
\end{equation}
coincides with the classic normalized SIR model \eqref{sir}. By plugging the second equation of \eqref{hsir5} into the first equation, we have
 \begin{equation}\label{hsir6}
\begin{aligned}
\frac{dS}{dt}=-\lambda S\left(1-S+\int_{S_0}^{S} \mathbb{G}(S-\mathbb{S})d\mathbb{S}-I_0\mathbb{G}(S)\right),
\end{aligned}
\end{equation}
from which one gets
 \begin{equation}\label{hsir7}
\begin{aligned}
\dfrac{1}{\lambda S\left(1-S+\int_{S_0}^{S} \mathbb{G}(S-\mathbb{S})d\mathbb{S}-I_0\mathbb{G}(S)\right)}dS=-dt,
\end{aligned}
\end{equation}
whose solution can be found depending on the specific form of $\mathbb{G}$ that one postulates.

%%%%%%%%%%%%%%%%%%%%%%%%%%%%%%%%%%%%
%%%%%%%%%%%%%%%%%%%%%%%%%%%%%%%%%%%%

\section{An Approximation of the Nonlocal SIR Model}

In this section, we use a similar approach as in Section \ref{sec:newSIR} to derive new approximating 
solutions to the hybrid differential-integral SIR model in \eqref{hsir}. We shall first derive an
effective ``$\tau$-model" by using a nonlocal stretching of the time axis. The properties of
the $\tau$-model will then be analyzed in detail by some judicious choices of the interaction
kernels.

\subsection{Derivation of the $\tau$-Model}
We introduce a new time scale $\tau$ as
\begin{align*}
\tau (t) = \int_0^t  I(s) \,\ud s, \qquad\text{i.e.} \quad \ud\tau =  I \ud t.
\end{align*}
It is clear that for all practical situations $I(t)$ is positive 
as long as $S_0\ne  0$ and $I_0 \ne 0$ (recall in the second equation
of \eqref{hsir}, one has $I(t)=
\int_0^t (1-G(t-\tilde t)) (\lambda I(\tilde t) S(\tilde t) ) d\tilde t
+I_0 (1-G(t) )$). Hence, $\tau$ is a strictly increasing function, and thus, $\tau$ has an inverse function, which will be denoted as $\varphi$.  Note that $\varphi(0)=0$.
 
Let $ S(t) = S_1 (\tau(t))$, $ I(t) =I_1(\tau(t) )$. Then it follows from \eqref{hsir} that
\begin{align*}
S_1 (\tau)&=S_0 e^{- \lambda\tau };\\
I_1 (\tau)&=\lambda \int_0^\tau[1-G(\varphi(\tau)-\varphi(\tilde\tau))]S_1 (\tilde\tau)\,\ud \tilde\tau+I_0[1-G(\varphi(\tau))].
\end{align*}

Denote $G_1(\tau,\tilde \tau) = G( \varphi(\tau)- \varphi(\tilde \tau) )$.  Then we obtain
\begin{align*}
I_1 (\tau)&=\lambda S_0\int_0^\tau[1- G_1(\tau, \tilde\tau)]e^{- \lambda\tilde\tau}\,\ud \tilde\tau+I_0[1- G_1(\tau,0 )]\\
&=\lambda S_0e^{-\lambda\tau}\int_0^\tau[1- G_1(\tau, \tau-\tilde\tau)]e^{ \lambda\tilde\tau}\,\ud \tilde\tau+I_0[1-G_1(\tau,0)],
\end{align*}
where in the second equality above, we have made a change of variable $\tilde \tau
\to \tau-\tilde \tau$. Since $G$ is a cumulative density function, 
the function $G_1(\tau, \tilde \tau)$ vanishes when $\tau\le \tilde \tau$,
$G_1(\tau, \tilde \tau)$ is
monotonically increasing in the  variable $\tau$ when $\tilde \tau$ is held fixed, 
and is monotonically decreasing in the variable $\tilde \tau$ when $\tau$ is fixed,
and $G_1(\tau, \tilde
\tau )$ tends to $1$ as $\tau$ tends to infinity.\footnote{From a modeling point of view,
one can
regard $G_1(\tau,\tilde\tau )$ as a new family of heterogeneous cumulative distribution functions
coming from some prior probability distributions.
For example, one can take a class of probability density kernels $p=p(\tau,\tau_0)$ which are 
defined on the region $\tau\ge \tau_0$ (think of $\tau_0$ as a parameter marking
the time origin). 
Define $G_1(\tau, \tau_0)= \int_{\tau_0}^{\tau} p(s, \tau_0)ds$. Then clearly such function
$G_1(\tau,\tau_0)$ represents a parameterized  class of cumulative distributions which are heterogeneous in the
sense that it has some dependence on the location $\tau_0$. } To proceed further we 
now dispose of the requirement that $G_1$ was obtained from a cumulative density
function $G$ through a nonlinear map $\varphi$, and keep in stock only some (fairly
mild) consistency
conditions. As will become clear, such relaxation gives rise to a more general family of nonlocal
models which includes the original ones as special cases.  To this end,
denote
\begin{align*}
D_-:=\{ (x, y) \in \mathbb R^2:\, \; 0\le y\le x<\infty\}.
\end{align*}
The following definition seems quite natural.
\begin{defn}[Admissible kernels]
A continuous function $G_1=G_1(\tau,\tilde \tau) $:\, $D_-\to [0,1]$ is said to be admissible if the following conditions hold:
\begin{itemize}
\item $G_1(\tau,\tau)=0$ for any $0\le \tau <\infty$ and $G_1(\tau,\tilde \tau )\to 1$ as 
$\tau \to \infty$ for each fixed $\tilde \tau$.
\item $G_1(\tau, \tilde \tau)$ is non-decreasing in $\tau$ when $\tilde \tau$ is fixed, and
$G_1(\tau, \tilde \tau)$ is non-increasing in $\tilde \tau$ when $\tau$ is fixed.
%\item 
\end{itemize}
\end{defn}
\begin{rem*}
The continuity requirement on $G_1$ is for convenience and simplicity only. It can certainly
be weakened to partial continuity (in each variables separately) or upper or lower continuity depending
on the convention used in the cumulative distribution function. We shall not dwell on this issue
here.
\end{rem*}

\begin{defn}[$\tau$-model]
Let $\lambda>0$. Let $G_1$ be a given admissible kernel, $S_0>0$, $I_0>0$. We say
($S_1$, $I_1$) evolves according to a $\tau$-model with parameters $(\lambda, G_1)$ 
and initial data ($S_0$, $I_0$) if  $S_1(\tau) = e^{-\lambda \tau} S_0$, and
\begin{align*}
I_1(\tau) 
=\lambda S_0e^{-\lambda\tau}\int_0^\tau[1- G_1(\tau, \tau-\tilde\tau)]e^{ \lambda\tilde\tau}\,\ud \tilde\tau+I_0[1-G_1(\tau,0)], \quad \tau>0.
\end{align*}

\end{defn}
\begin{rem*}
By using the discussion preceding the definition of admissible kernel, it is immediately
clear that the nonlocal model \eqref{hsir} is a special case of our $\tau$-model through
the identification $G_1(\tau,\tilde \tau)= G( \varphi(\tau) -\varphi(\tilde \tau))$. Here one tacitly
assume that the model \eqref{hsir} is already solved on some time interval and there is no difficulty in defining
the nonlinear (albeit not explicit) map $\varphi$. We ignore completely the subtle dependence
of the map $\varphi$ on the cumulative distribution $G$ and treat the resulting $G_1$ function
as given. To obtain more accurate models one can perform iterations on the $\tau$-model
(by feeding iterated trial kernel functions $G_1^{(k)}=G(\varphi^{(k)} (\tau)-\varphi^{(k)} (\tilde \tau))$)
which in some sense will correspond to performing fixed-point type arguments on the original
nonlocal model.
\end{rem*}
\begin{rem*}
Since  $G_1(\tau, \tau-\tilde \tau) \to 0$ as $\tilde \tau \to 0$, we have $I_1(\tau)>0$ for any $0\le \tau <\infty$.  On the other hand, by a change of variable
\begin{align*}
I_1(\tau) = \lambda S_0  \int_0^{\tau} 
[1-G_1(\tau,\tilde \tau)]e^{-\lambda \tilde \tau} d\tilde \tau + I_0 [1-G_1(\tau,0)].
\end{align*}
By using Lebesgue Dominated Convergence, it follows that $I_1(\tau) \to 0$ as $\tau\to \infty$.
It follows that the map
\begin{align*}
\psi (\tau) = \int_0^{\tau} \frac 1 {I_1(\tilde \tau) } d\tilde \tau,
\end{align*} 
provides a monotone and smooth bijection of the time axis $[0,\infty)$.  Denote the inverse map of
$\psi$ as $\psi^{-1}$. Then in the original time scale $t$, we can recover
\begin{align*}
S(t) = S_1(\psi^{-1} (t) ), \quad I(t) = I_1( \psi^{-1} (t) ).
\end{align*}
\end{rem*}

\begin{rem*}
In practice,  the inverse map $\psi^{-1}$ can be recovered more efficiently using a direct
discretization of the $\tau$-variables. More precisely one may (as before in the classic SIR case) first discretize the interval $[0,\infty)$
as $\tau_0, \tau_1,\tau_2, \cdots, $, and solve
\begin{align*}
t_j = \int_0^{\tau_j} \frac 1 {I_1(\tilde \tau) } d\tilde \tau, \quad j=0,1,\cdots.
\end{align*}
Since $ S(t_j) = S_1(\tau_j)$ and $ I (t_j) = I_1(\tau_j)$, the dynamics of $(S, I)$ is then (approximately) fully recovered.
\end{rem*}

Our $\tau$-model shares some general features in common with the classic models. As an example, the following
proposition is a manifestation of the maximum principle in the nonlocal situation. 
\begin{prop}[Maximum principle]
For all $\tau\ge 0$, $I_1(\tau)>0$ and $I_1(\tau) \to 0$ as $\tau \to \infty$. Also for all
$\tau>0$,
\begin{align*}
S_1(\tau)+I_1(\tau) \le S_0+I_0.
\end{align*}
\end{prop}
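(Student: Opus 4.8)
The plan is to read off all three assertions directly from the two integral representations of $I_1$ recorded above, using only the structural properties of an admissible kernel: $0\le G_1\le 1$, $G_1(\tau,\tau)=0$, and $G_1(\tau,\tilde\tau)\to 1$ as $\tau\to\infty$ for each fixed $\tilde\tau$. No genuine maximum-principle machinery is needed here; everything reduces to elementary estimates on the kernel.

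For the strict positivity of $I_1$ I would treat $\tau=0$ separately, where the integral is empty and $I_1(0)=I_0[1-G_1(0,0)]=I_0>0$. For $\tau>0$ I would use the form $I_1(\tau)=\lambda S_0 e^{-\lambda\tau}\int_0^\tau[1-G_1(\tau,\tau-\tilde\tau)]e^{\lambda\tilde\tau}\,\ud\tilde\tau+I_0[1-G_1(\tau,0)]$. Since $G_1\le 1$ the integrand is nonnegative on $[0,\tau]$ and the last summand is $\ge 0$; since $\tilde\tau\mapsto G_1(\tau,\tau-\tilde\tau)$ is continuous on $[0,\tau]$ and vanishes at $\tilde\tau=0$ (as $G_1(\tau,\tau)=0$), the integrand equals $1$ at $\tilde\tau=0$ and stays strictly positive on a right neighborhood of $0$, so the integral is strictly positive. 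As $\lambda S_0 e^{-\lambda\tau}>0$, this forces $I_1(\tau)>0$.

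For the decay $I_1(\tau)\to 0$ I would switch to the equivalent form $I_1(\tau)=\lambda S_0\int_0^{\tau}[1-G_1(\tau,\tilde\tau)]e^{-\lambda\tilde\tau}\,\ud\tilde\tau+I_0[1-G_1(\tau,0)]$ obtained by the change of variable $\tilde\tau\mapsto\tau-\tilde\tau$. The last term tends to $0$ as $\tau\to\infty$ because $G_1(\tau,0)\to 1$. For the integral I would write it as $\int_0^\infty \mathbf{1}_{\{0\le\tilde\tau\le\tau\}}[1-G_1(\tau,\tilde\tau)]e^{-\lambda\tilde\tau}\,\ud\tilde\tau$, dominate the integrand by $e^{-\lambda\tilde\tau}\in L^1([0,\infty))$, note that for each fixed $\tilde\tau\ge 0$ it converges to $0$ as $\tau\to\infty$ (eventually $\tilde\tau\le\tau$, and then $1-G_1(\tau,\tilde\tau)\to 0$), and conclude by dominated convergence. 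This interchange of limit and integral is the only step requiring a genuine limiting argument, so I expect it to be the main --- though still routine --- point to nail down.

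Finally, for the bound $S_1(\tau)+I_1(\tau)\le S_0+I_0$ I would use the second representation of $I_1$ together with the crude estimate $1-G_1\le 1$: the integral term is bounded by $\lambda S_0\int_0^\tau e^{-\lambda\tilde\tau}\,\ud\tilde\tau=S_0(1-e^{-\lambda\tau})$, and $I_0[1-G_1(\tau,0)]\le I_0$. Adding $S_1(\tau)=S_0e^{-\lambda\tau}$ then gives $S_1(\tau)+I_1(\tau)\le S_0 e^{-\lambda\tau}+S_0(1-e^{-\lambda\tau})+I_0=S_0+I_0$, which is exactly the claimed inequality, and the same computation covers $\tau=0$ with equality. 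The bound is far from sharp in general --- in the classic SIR specialization one has the exact identity $S_1+I_1=S_0+I_0-\gamma\tau$ --- but it is all that is asserted.
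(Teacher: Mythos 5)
Your argument is correct and follows the same route as the paper: positivity from the vanishing of $G_1(\tau,\tau-\tilde\tau)$ near $\tilde\tau=0$, decay via dominated convergence with dominating function $e^{-\lambda\tilde\tau}$ (both established in a remark preceding the proposition), and the sum bound from the crude estimate $1-G_1\le 1$ together with $\lambda S_0 e^{-\lambda\tau}\int_0^\tau e^{\lambda\tilde\tau}\,d\tilde\tau=S_0(1-e^{-\lambda\tau})$. Your write-up is in fact a bit cleaner than the paper's displayed inequality, which appears to have dropped an $S_1(\tau)$ term on one side before arriving at the same conclusion.
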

\begin{proof}
The properties of $I_1$ were proved in one of the remarks before. 
Now since $ G_1\ge 0$, it is clear that
\begin{align*}
S_1(\tau)+I_1(\tau) &\le  \lambda S_0e^{- \lambda\tau }\int_0^\tau e^{\lambda\tilde\tau }\,\ud \tilde\tau+I_0  \notag \\
&= S_0+I_0 - S_0 e^{-\lambda \tau}
\le S_0+I_0\quad\mbox{for all }\tau.
\end{align*}
\end{proof}

\subsection{Simplified $\tau$-Models}
Our $\tau$-model is quite flexible since different choices of the admissible kernel $G_1$ will lead
to
 various new nonlocal SIR models. 
We now choose a homogeneous kernel $G_1(\tau,\tilde \tau)= \tilde G(\tau-\tilde \tau)$,
where $\tilde G:\, [0,\infty) \to [0,1]$ is  a given cumulative distribution function. 
Then clearly
\begin{align} \label{I1_simple}
I_1(\tau)
&=\lambda S_0e^{-\lambda\tau}\int_0^\tau[1-\tilde G(\tilde\tau)]e^{ \lambda\tilde\tau}\,\ud \tilde\tau+I_0[1-\tilde G(\tau)].
\end{align}
The explicit form of $I_1$ can sometimes be worked out by using the specific form of $\tilde{G}$ that one postulates.  Now we discuss a few simple cases which are analogous to the classic SIR
model.

Case 1:  $\tilde G(\tau)=1-e^{-A\tau}$ with $0<A< \lambda $. It is easy to check that
\begin{align*}
I_1=\frac{\lambda S_0}{A-\lambda}e^{-\lambda\tau}+(I_0-\frac{\lambda S_0}{A-\lambda})e^{-A\tau}.
\end{align*}
Clearly  $I_1$ will first increase and then decrease to zero as $\tau\to\infty$. See Figure \ref{fig:3}.
This one corresponds to the case where the
reproduction number is bigger than $1$ in the classic SIR model.
\begin{figure}[H]
  \centering
   \includegraphics[width=6cm]{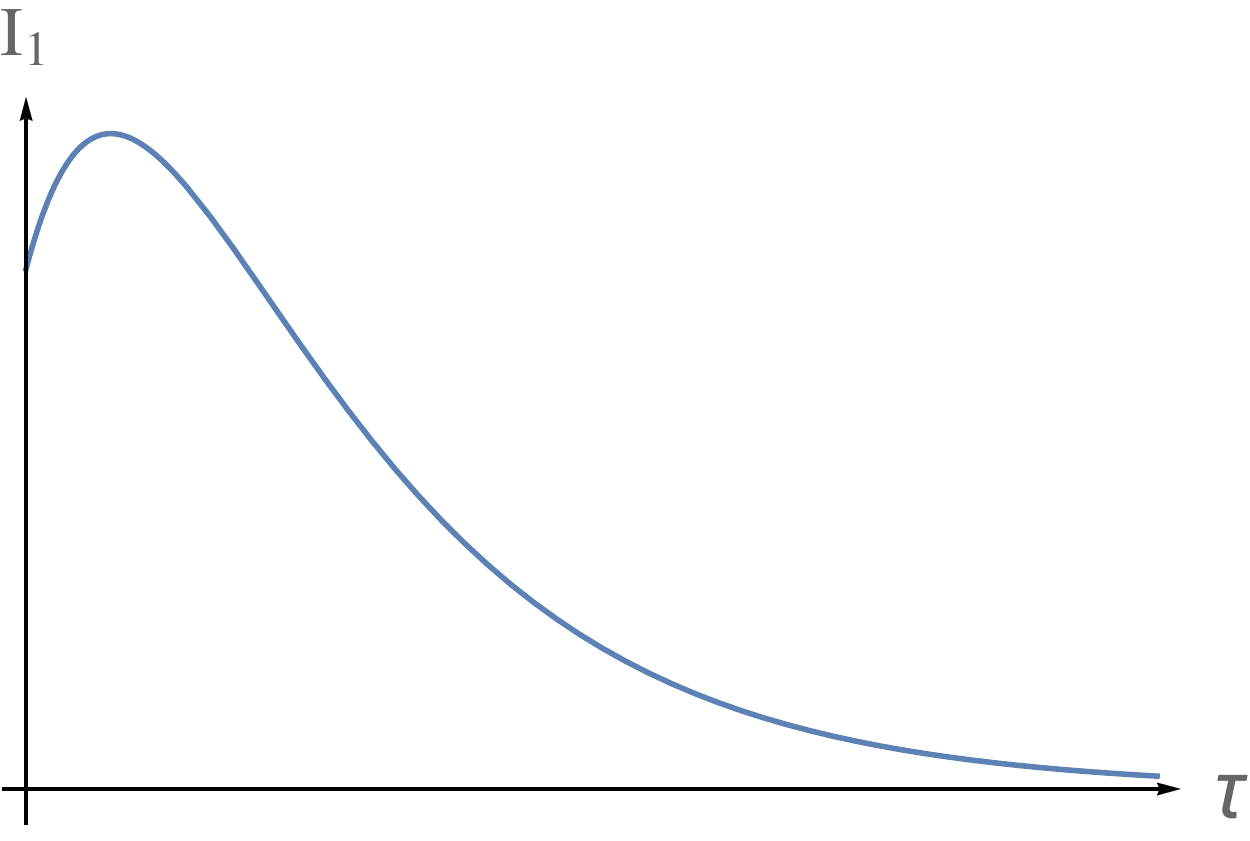}
     \caption{The function $I_1$ in Case 1}\label{fig:3}
\end{figure}

Case 2: $\tilde G(\tau)=1-e^{-A\tau}$ with $\lambda<A<\lambda\left(1+\frac{S_0}{I_0}\right)$ and
\begin{align*}
I_1=\frac{\lambda S_0}{A-\lambda}e^{-\lambda\tau}+(I_0-\frac{\lambda S_0}{A-\lambda})e^{-A\tau}.
\end{align*}
In this case $I_1$ will monotonically decrease to zero as $\tau\to\infty$. See Figure \ref{fig:4}.
This one corresponds to the case where the reproduction
number is less than or equal to $1$ in the classical SIR model. 
\begin{figure}[H]
  \centering
   \includegraphics[width=6cm]{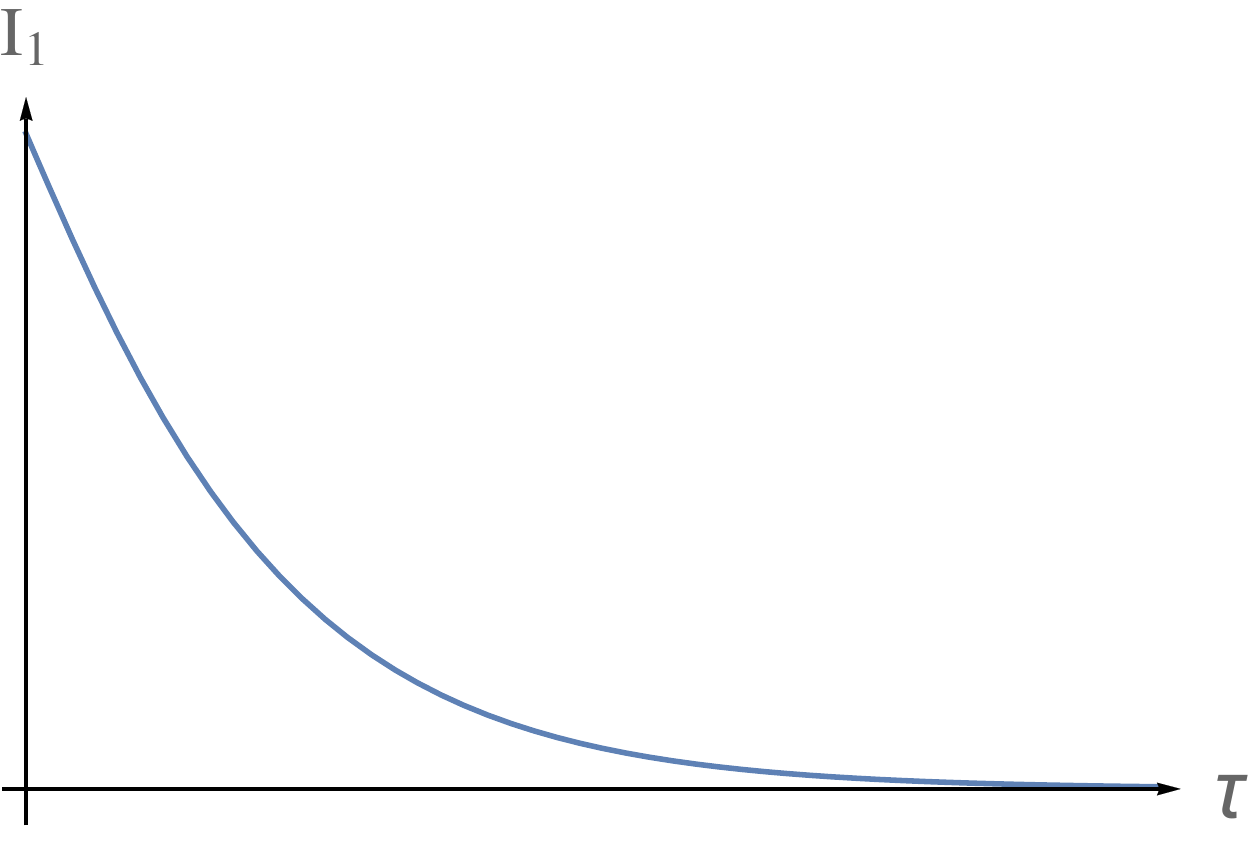}
     \caption{The function $I_1$ in Case 2}\label{fig:4}
\end{figure}

Case 3: An important difference from the classic SIR model is that in our $\tau$-model, $I_1$ may exhibit several bumps, that is, $I_1$ may have several local maximum points. To see this phenomenon in the case of at least two bumps, we  shall need to make some further simplifications
of the model \eqref{I1_simple} by yet another stretching of the time axis. Since this formulation will be needed for the remaining part
of this section, we shall record it here as a proposition. 

\begin{prop}
By using the change of variable $s=s(\tau)=e^{\lambda\tau}$, the model \eqref{I1_simple} is equivalent to the integral equation:
\begin{align} \label{fsgse1}
f(s)= \frac {\beta}s \int_1^s g(\tilde s) \,\ud \tilde s+ g(s), \quad s\ge 1,
\end{align}
where $f(s)= \frac {I_1( \log (s) /\lambda)} {I_0}$, $g(s)=1-\tilde G(\log(s)/\lambda)$  and $\beta=S_0/I_0>0$. Note that $g(s)$ is a given continuous non-increasing function defined on $[1,\infty)$ such that $g(1)=1$ and $g(s)\to 0$ as $s\to\infty$. 
\end{prop}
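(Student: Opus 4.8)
The plan is simply to carry out the change of variables $s=e^{\lambda\tau}$ in \eqref{I1_simple} and to verify that the resulting identity is precisely \eqref{fsgse1}. Since $\tau\mapsto e^{\lambda\tau}$ is a smooth strictly increasing bijection of $[0,\infty)$ onto $[1,\infty)$ with smooth inverse $s\mapsto\frac1\lambda\log s$, this substitution sets up an honest one-to-one correspondence between functions on the $\tau$-axis and functions on the $s$-axis, so the word ``equivalent'' is justified in both directions: every solution of \eqref{I1_simple} yields one of \eqref{fsgse1} and conversely.

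First I would set up the elementary dictionary between the two variables: $\tau=\frac1\lambda\log s$, $e^{-\lambda\tau}=1/s$, and $\tau\in[0,\infty)$ corresponds to $s\in[1,\infty)$. With $f(s)=I_1(\frac1\lambda\log s)/I_0$ and $g(s)=1-\tilde G(\frac1\lambda\log s)$, the term $I_0[1-\tilde G(\tau)]$ in \eqref{I1_simple} becomes $I_0\,g(s)$ and the prefactor $\lambda S_0e^{-\lambda\tau}$ becomes $\lambda S_0/s$. Then I would transform the integral: substituting $\tilde s=e^{\lambda\tilde\tau}$ gives $\ud\tilde\tau=\ud\tilde s/(\lambda\tilde s)$ and $e^{\lambda\tilde\tau}=\tilde s$, so the factor $e^{\lambda\tilde\tau}$ cancels the Jacobian denominator and $\int_0^\tau[1-\tilde G(\tilde\tau)]e^{\lambda\tilde\tau}\,\ud\tilde\tau=\frac1\lambda\int_1^s g(\tilde s)\,\ud\tilde s$, the limits $0,\tau$ going to $1,s$. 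Multiplying by $\lambda S_0/s$ produces $\frac{S_0}{s}\int_1^s g(\tilde s)\,\ud\tilde s$, hence $I_1(\tau)=\frac{S_0}{s}\int_1^s g(\tilde s)\,\ud\tilde s+I_0\,g(s)$; dividing by $I_0$ and writing $\beta=S_0/I_0$ gives exactly \eqref{fsgse1}.

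Finally I would record the asserted properties of $g$: continuity of $g$ follows from that of $\tilde G$ composed with $\log$; $\tilde G$ non-decreasing forces $g$ non-increasing; $g(1)=1-\tilde G(0)=1$ because $\tilde G$ is a cumulative distribution function supported on $[0,\infty)$, so $\tilde G(0)=0$ (consistent with $G_1(\tau,\tau)=0$ for the admissible kernel); and $g(s)\to0$ as $s\to\infty$ since $\tilde G(\tau)\to1$ as $\tau\to\infty$. I do not anticipate any genuine obstacle here, as the argument is a one-line change of variables; the only points deserving a moment's care are the bookkeeping of the Jacobian factor $1/(\lambda\tilde s)$ and its cancellation against $e^{\lambda\tilde\tau}=\tilde s$ in the inner integral, together with the (immediate but worth stating) observation that the substitution is a bijection, so that the equivalence between \eqref{I1_simple} and \eqref{fsgse1} holds in both directions.
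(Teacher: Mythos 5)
Your proof is correct and matches the paper's intent; the paper simply declares the proof ``obvious,'' and your explicit change-of-variables computation, including the cancellation of the Jacobian factor $1/(\lambda\tilde s)$ against $e^{\lambda\tilde\tau}=\tilde s$, is exactly the routine verification being alluded to.
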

\begin{proof}
The proof is obvious. One should note that (for later discussions)
 the monotonicity of $I_1(\tau)$ is the same as the one of $f(s)$. Also observe that thanks
 to the transformation $s=e^{\lambda \tau}$, exponential decay in the $\tau$ variable
 then translates to the power decay in the $s$-variable. 
 \end{proof}
 \begin{rem*}
From the relation
\begin{align*}
f(s)=\beta\cdot\frac 1s \int_{1}^s g(\tilde s)\,\ud \tilde s+g(s),
\end{align*}
one sees that $f(s) \gtrsim s^{-1}$ for all $s$, thus $f$ cannot have
better than $s^{-1}$ decay.
\end{rem*}
 \begin{rem*}
 As was already mentioned, our later results of this section will be stated in terms of the pair
 $(g, f)$ with $g$ being a given kernel function and $f$ being the infected population in ``$s$-variable". The main difficulty in our constructions later is the requirement that the kernel function 
 $g(s)$ must monotonically decrease to zero when $s$ tends to infinity. Somewhat surprisingly
 despite this rigid requirement there can still appear oscillations in the infected population by
 judicious choice of the kernel functions. 
  \end{rem*}
  \begin{rem} \label{rem_gf}
 The map $g\to f$ in \eqref{fsgse1} is invertible and in this sense $g$ and $f$ is in one to one
 correspondence.  To see this, one can rewrite \eqref{fsgse1} as
\begin{align*}
(s^{\beta} \int_1^s g(\tilde s) d\tilde s)^{\prime} = f(s) s^{\beta}.
\end{align*}
Integrating and some simple algebra then yield
\begin{align*}
g(s) &= -\beta s^{-\beta-1} \int_1^s f(\tilde s) \tilde s^{\beta} d\tilde s + f(s), \\
g^{\prime}(s) &= (s^{-\beta} \int_1^s \tilde s^{\beta}
f(\tilde s) d\tilde s)^{\prime\prime} \notag \\
&=
\beta(\beta+1) 
s^{-\beta-2} \int_1^s f(\tilde s) \tilde s^{\beta} d\tilde s
- \beta \frac {f(s) } s + f^{\prime}(s).
\end{align*}
\end{rem}

 To see the phenomenon of having at least two peaks in $I_1$, we now give an explicit computable
(albeit not smooth) example in terms of $f$. Choose $\beta=2$ and  a piecewise continuous function $g$ as:
\[
g(s):=
\begin{cases}
\frac 1s,\quad 1\le s\le 2;\\
\frac 12\quad 2\le s\le 2.1;\\
\frac{2.1}{2s}\quad s\ge 2.1.
\end{cases}
\]
Then
\[
f(s):=
\begin{cases}
\frac{2\ln(s)+1}{s},\quad 1\le s\le 2;\\
\frac 32+\frac{2\ln 2-2}{s}\quad 2\le s\le 2.1;\\
\frac 2s \left(\ln 2+\frac{1}{20}+\frac{2.1}{2}\ln(\frac{s}{2.1})\right)+\frac{2.1}{2s}\quad s\ge 2.1.
\end{cases}
\]
The picture of this $f(s)$ and $I_1(\tau)$ can be illustrated in Figure \ref{fig:5} below:
\begin{figure}[H]
  \centering
   \includegraphics[width=6cm]{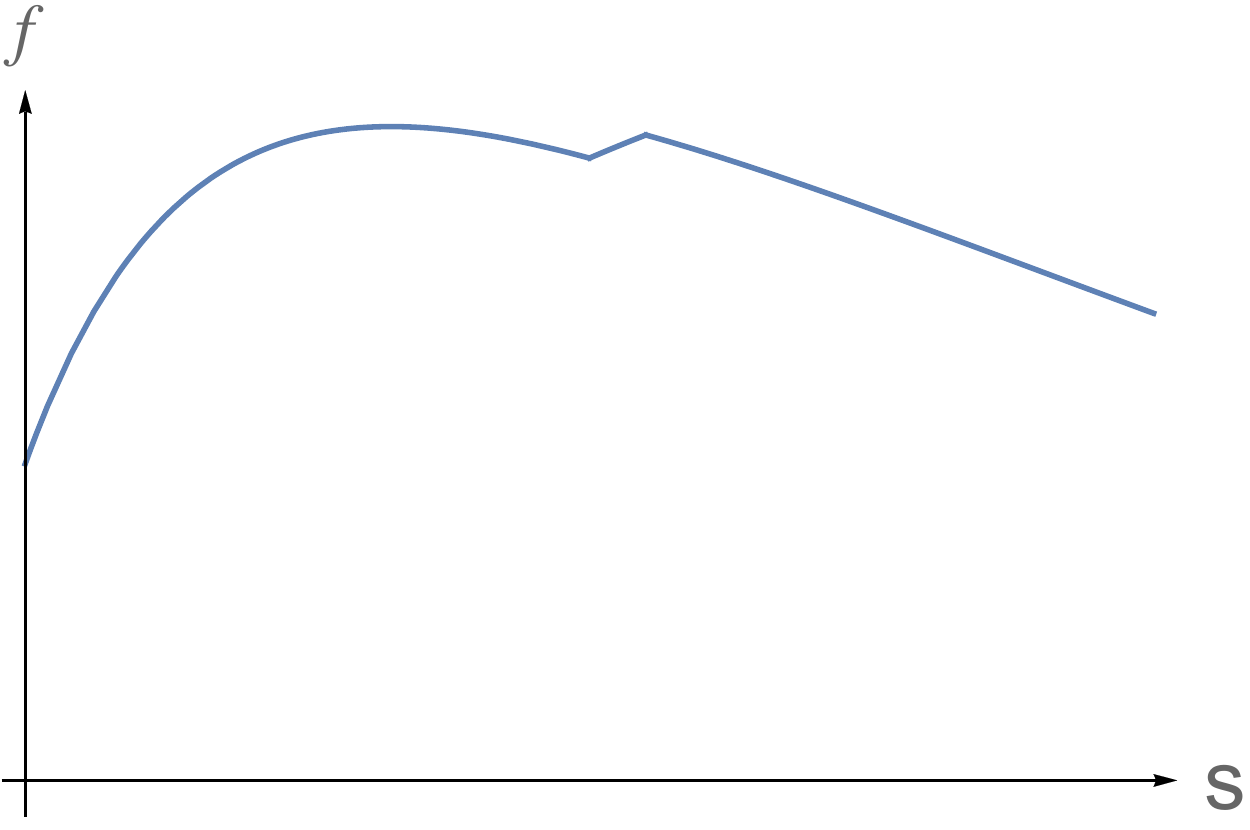}\hfill \includegraphics[width=6cm]{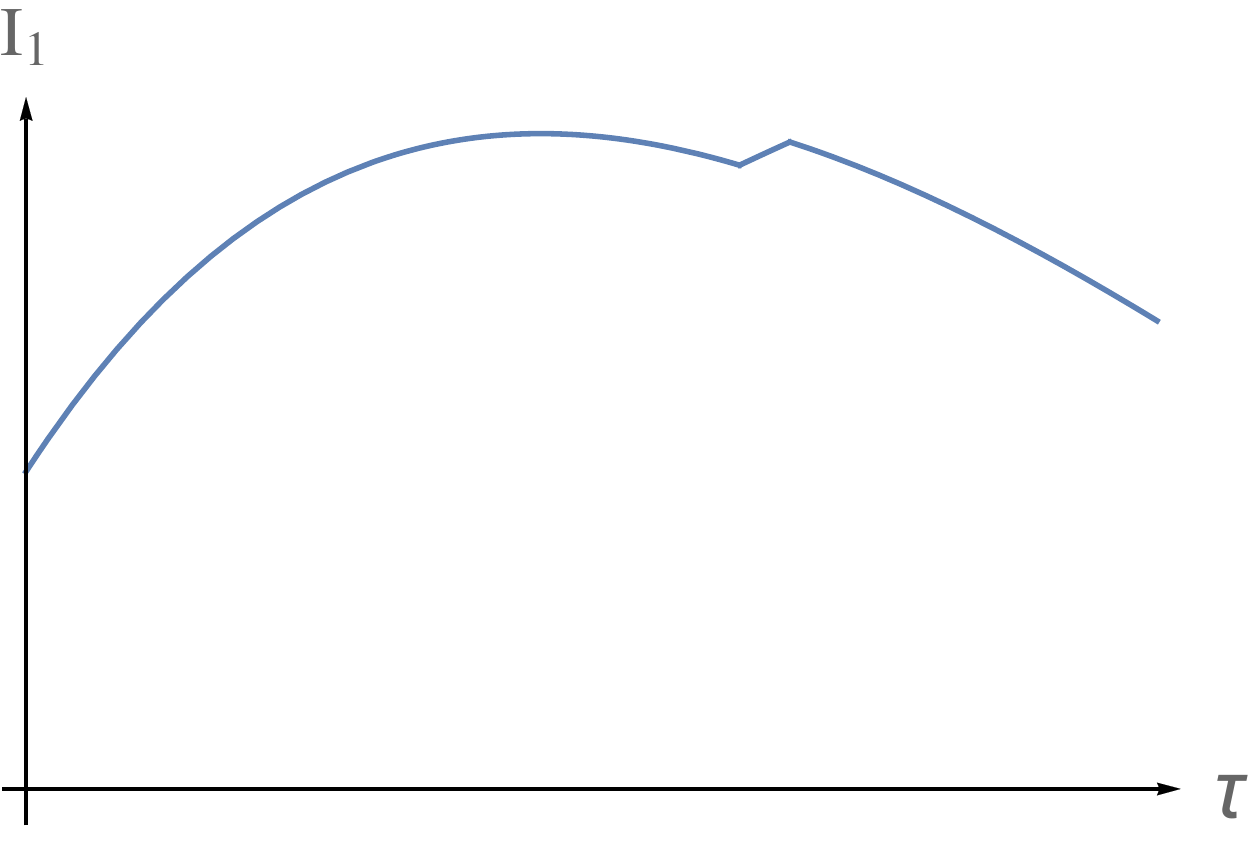}
     \caption{The functions $f$, and $I_1$ (with $\lambda=0.1$), in Case 3}\label{fig:5}
\end{figure}

Although the above $g$ and $f$ are not smooth, it provides some good evidence that one
can hope to construct smooth solutions with two or more peaks. One should note, however, that
a direct mollification of the $g$ function may introduce $O(1)$-changes for the derivative of $f$ near its
original peak and thus may destroy the peak. On the other hand, if one prefers to mollifying directly the function
$f$ near its two peaks, then one has to make sure that $g$ still remains monotonically decreasing
after mollification, that is, 
\begin{align*}
g^{\prime}(s) =\beta(\beta+1) 
s^{-\beta-2} \int_1^s f(\tilde s) \tilde s^{\beta} d\tilde s
- \beta \frac {f(s) } s + f^{\prime}(s) \le 0, \quad \forall\, s.
\end{align*}
Now note that near the second peak of $f$, $f^{\prime}$ is not continuous and  it has
$O(1)$-jump around the peak. Since $g^{\prime}(2.1-)=0$, the above inequality may
well stop being true after mollification due to this $O(1)$-change.
For these considerations we shall present in detail
the construction and also the mollification procedure in the remaining part of this section.
Our main results in the next subsection are Proposition \ref{lem51} and Proposition 
\ref{prop53} which give the existence of solutions having multiple peaks. To clarify the meaning
of peak solution, we now introduce a simple definition.

\begin{defn}[Peak]
A point $s_0 \in (1,\infty)$ is said to be a peak (i.e. strict local max) for the function $f:\, [1,\infty) \to (0,\infty)$,
if there exists a local neighborhood $\mathcal N_0$ of $s_0$, such that $f(s)<f(s_0)$
for any $s \in \mathcal N_0$, $s\ne s_0$. In particular, if $f$ is $C^2$, $f^{\prime}(s_0)=0$ and
$f^{\prime\prime}(s_0)<0$, then $s_0$ is a peak for $f$. 
\end{defn}
Our main objective is to find  kernel functions $g$ which decrease monotonically to zero
such that the resulting infected population $f$ possesses multiple peaks.  Thanks to the algebra
in Remark \ref{rem_gf},  it suffices for us to construct a positive  function $f$ 
with multiple peaks and obeying the nonlocal
inequality $g^{\prime} <0$ (along with decay conditions at infinity) i.e.:
\begin{align*}
\beta(\beta+1) 
s^{-\beta-2} \int_1^s f(\tilde s) \tilde s^{\beta} d\tilde s
- \beta \frac {f(s) } s + f^{\prime}(s) < 0, \quad \forall\, s \ge 1.
\end{align*}
Achieving the above inequality point-wisely together with $C^{\infty}$-smoothness turns out
to be a delicate task, and to elucidate the main ideas we shall present a rough result first
(Proposition \ref{lem51}) which will be upgraded later in Proposition \ref{prop53}. 
In both propositions we shall slightly change the notation and denote $f$ by $I$, that is, 
\begin{align*}
I(s)= \frac {\beta} s \int_1^s g(\tilde s) d\tilde s+g (s), \quad s\ge 1;
\end{align*}
and equivalently 
\begin{align*}
g(s) &= -\beta s^{-\beta-1} \int_1^s I(\tilde s) \tilde s^{\beta} d\tilde s + I(s), \\
g^{\prime}(s) &= (s^{-\beta} \int_1^s \tilde s^{\beta}
I(\tilde s) d\tilde s)^{\prime\prime} \notag \\
&=
\beta(\beta+1) 
s^{-\beta-2} \int_1^s I(\tilde s) \tilde s^{\beta} d\tilde s
- \beta \frac {I(s) } s + I^{\prime}(s).
\end{align*}
\subsection{Peak Solutions}

Case 4a: existence of arbitrarily (finitely) many peaks. We first show that it is possible to admit
finitely many peaks (as many as possible).  A stronger result will be established in 
forthcoming Case 4b (compare with Proposition \ref{prop53}). 

\begin{prop}[Finitely many peaks, rough control] \label{lem51}
Let $\beta> 0$. For any integer $m_0\ge 1$ and any $0<\theta\le 1$, there exist a family of 
$C^{\infty}$-smooth functions
$I=I(s): \, [1,\infty) \to (0,\infty)$ such that
\begin{align} \label{lem51eq0}
&  \left(s^{-\beta} \int_1^s I(\tilde s ) \tilde s^{\beta} d\tilde s\right)^{\prime\prime} \notag \\
=&\;
\beta(\beta+1) s^{-\beta-2}\int_1^s I(\tilde s) \tilde s^{\beta} d\tilde s
-\beta s^{-1} I(s) + I^{\prime}(s)  <0,
\quad\forall\, 1\le s<\infty.
\end{align}
Furthermore $I(s) \lesssim s^{-\theta}$, $|I^{\prime}(s)|
\lesssim s^{-1-\theta}$, and $I(s)$ has at least $m_0$ strict local maxima located
at some points $s_1$, $s_2$, $\cdots$, $s_{m_0}$, such that
$I^{\prime}(s_j)=0$, $I^{\prime\prime}(s_j)<0$ for any $1\le j \le m_0$. 
\end{prop}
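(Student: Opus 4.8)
The plan is \emph{not} to perturb around a single smooth monotone profile — as the remark preceding the proposition explains, that runs into an $O(1)$ obstruction — but to first build an explicit \emph{piecewise} kernel and then mollify it carefully. Recall from Remark~\ref{rem_gf} that it is equivalent to produce a kernel $g\colon[1,\infty)\to(0,\infty)$, strictly decreasing with $g(s)\to0$, such that $I(s)=\tfrac{\beta}{s}\int_1^s g(\tilde s)\,d\tilde s+g(s)$ has at least $m_0$ non-degenerate maxima; indeed the inequality \eqref{lem51eq0} is exactly $g'<0$. So I would proceed in two stages: (i) construct a continuous, piecewise-$C^\infty$, strictly decreasing $g_0$ whose associated $I_0$ has $\ge m_0$ corner-type strict local maxima; (ii) mollify $g_0$ near its finitely many corners and check that both $g'<0$ and the peaks of $I_0$ survive.

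For stage (i) I would imitate and iterate the explicit example of Case~3 (Figure~\ref{fig:5}). Take $g_0$ to be a ``staircase'': $m_0$ nearly-flat but \emph{tilted} plateaus at strictly decreasing levels $g_*^{(1)}>g_*^{(2)}>\dots>g_*^{(m_0)}$, joined by short steep drops, followed by a far field $g_0(s)=c\,s^{-2}$ (so $g_0$ stays continuous, strictly decreasing, and $\to0$; the tilt means $g_0'<0$ on each plateau, never $=0$). From $I_0'=\tfrac{\beta}{s}(g_0-\bar g_0)+g_0'$ with $\bar g_0(s)=\tfrac1s\int_1^s g_0$, one sees that on a plateau $I_0'>0$ precisely when the plateau level exceeds the running average $\bar g_0$; thanks to the built-in $1/s$-dilution in $\bar g_0$, this holds as long as the levels $g_*^{(j)}$ do not drop too fast, which I would ensure by keeping all plateaus inside a fixed compact interval and taking the ratios $g_*^{(j)}/g_*^{(j-1)}$ close to $1$ (depending on $m_0$). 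At the right end of each plateau $g_0'$ jumps down to a strongly negative value, so $I_0'$ jumps from a positive to a negative value through $0$ — a strict local maximum; at each plateau's left end $I_0'$ jumps up through $0$ — a local minimum. Thus $I_0$ has exactly $m_0$ corner maxima. Since $g_0\lesssim s^{-2}$ in the far field and is bounded on the compact part, $\int_1^s g_0$ stays bounded and $I_0(s)\lesssim s^{-1}\lesssim s^{-\theta}$, $|I_0'(s)|\lesssim s^{-2}\lesssim s^{-1-\theta}$ for all $0<\theta\le1$ — a routine estimate.

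For stage (ii), fix a smooth nonnegative mollifier $\rho$ with $\rho>0$ on the interior of its support $[-1,1]$, and let $g$ agree with $g_0$ outside $\delta$-neighbourhoods of the corners and equal $g_0\ast\rho_\delta$ near them, with $\delta$ small. Then $g\in C^\infty$, $g>0$, $g\to0$. Strict monotonicity survives: away from corners $g'=g_0'<0$, and near a corner $g'$ is a $\rho_\delta$-average of one-sided values of $g_0'$, all of which are strictly negative because of the tilting — this is exactly why flat plateaus would not do. Persistence of the peaks is the delicate point. Away from corners $I$ and its derivatives differ from those of $I_0$ by $O(\delta^2)$, so nothing is lost there; near a corner carrying a maximum, $I'$ jumps \emph{downward} through $0$ and $I''$ is, as a distribution, a bounded density plus a strictly negative point mass at the corner. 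Hence on that $\delta$-neighbourhood $I''=[\text{uniformly bounded}]+g''$, where $g''=g_0''\ast\rho_\delta$ behaves like $(\text{negative})/\delta$, so $I''<0$ there for $\delta$ small; consequently $I'$ is strictly decreasing on the neighbourhood and crosses $0$ exactly once, within $O(\delta)$ of the corner, at a point where $I''<0$. (The local minima at the other corners are treated the same way and cause no merging of critical points.) Therefore $I$ is $C^\infty$, satisfies \eqref{lem51eq0}, obeys the stated decay, and has at least $m_0$ strict local maxima $s_1<\dots<s_{m_0}$ with $I'(s_j)=0$, $I''(s_j)<0$.

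The main obstacle is stage (ii): as the remark before the proposition notes, mollifying $g_0$ perturbs $I_0'$ by an $O(1)$ amount near a corner, so one cannot argue by continuity. One must use the \emph{structure} — that $I'$ jumps downward through zero and that the singular part of $I''$ is a negative point mass which dominates after smoothing — to keep the zero transversal; and one must \emph{simultaneously} keep $g'<0$, which is what forces the plateaus to be tilted rather than flat. It is precisely this tension — the kernel must be monotone while the infected population it generates must oscillate — that makes the construction delicate and that the sharper Proposition~\ref{prop53} handles with a more quantitative version of the same scheme.
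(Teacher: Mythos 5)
Your proposal is correct, but it takes a genuinely different route from the paper's.  The paper works directly on the $I$-side: it starts from the piecewise profile $I_0(s)=1$ on $[1,\overline S_0]$ glued to the power tail $\alpha_1(\overline S_0/s)^\theta$, and then \emph{adds a globally small oscillation} $\epsilon e^{-s}\sin s$ on $[1,\overline S_0]$.  Because this perturbation and its derivative are $O(\epsilon)$ uniformly, the strict inequality \eqref{lem51eq0} (which holds with definite slack for $I_0\equiv1$ and for the power tail) survives for $\epsilon$ small, and the $m_0$ strict maxima are those of $\sin s$, all sitting strictly inside $(1,\overline S_0)$.  Mollification (on $g$) is then needed at a \emph{single} point, $\overline S_0$, which is disjoint from all the peaks, so no transversality argument is required at the peaks at all.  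Your construction instead builds the kernel $g$ as a tilted staircase and manufactures each peak of $I$ as a corner of $g$; this is closer in spirit to the non-smooth Case~3 example in the text.  The price is exactly what the preceding remark in the paper warns about: the corners you must mollify coincide with the peaks, so you do need the transversality/point-mass argument (sign of the distributional jump in $g'$ dominating the bounded part of $I''$ after convolution) to keep the zeros of $I'$ nondegenerate, and you must keep all plateaus inside the region where the running average $\tfrac1s\int_1^s g$ is still below the plateau level.  Both routes are sound; the paper's is shorter and sidesteps the delicate mollification-at-the-peak step, while yours is more constructive about the kernel's shape and previews the mechanism needed in Proposition~\ref{prop53}.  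One small correction to your framing: the $O(1)$ obstruction you cite is about mollifying $g$ at a peak of $I$, not about perturbing a smooth monotone profile — the paper does perturb a smooth profile and avoids the obstruction precisely because its peaks are away from the one mollification point.
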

\begin{rem*}
First observe that it suffices for us to construct
a piecewise smooth $I(s)$ (overall as a Lipschitz continuous function) which satisfies \eqref{lem51eq0} except at one point $s=\overline S_0$. Then mollifying such $I(s)$ around $s=\overline S_0$ suitably easily yields
the desired smooth function still satisfying \eqref{lem51eq0}. Indeed, suppose  $I(s)$  satisfies \eqref{lem51eq0} for $1\le s<\overline S_0$ and $s>\overline S_0$ respectively,  $I(s)$ decays to zero as
$s$ tends to infinity and $I(s)$ has $m_0$ local maxima inside the interval $(1,\overline S_0)$. Also assume $I(\cdot)$ is $C^{\infty}$
for $s<\overline S_0$ and $s>\overline S_0$ respectively, and the LHS of \eqref{lem51eq0} is bounded
above by some uniform negative constant for $s$ close to $\overline S_0$.  Then 
\begin{align*}
g(s) &= \left(s^{-\beta} \int_1^s I(\tilde s) \tilde s^{\beta} d\tilde s\right)^{\prime}
= -\beta s^{-\beta-1} \int_1^s I (\tilde s ) \tilde s^{\beta} d\tilde s
+I(s)
\end{align*}
satisfies $g^{\prime}(s)<0$ for $s<\overline S_0$ and $s>\overline S_0$, and $g^{\prime}(s)<-c_0<0$
for $s$ close to $\overline S_0$ ($c_0>0$ is some constant). This (together with the decay of $g$ at
infinity) implies that $g$ is Lipschitz, $g>0$ for all $s$ and $g$ is strictly monotonically decreasing. 
Define $g_{\operatorname{smooth} }$ to be a suitable mollification of $g$ (the mollification is only
done in a sufficiently small neighborhood of $s=\overline S_0$). Define 
\begin{align*}
I_{\operatorname{smooth} }(s) = s^{-\beta} \cdot \left(s^{\beta} \int_1^s g_{\operatorname{smooth} }(\tilde s) d\tilde s\right)^{\prime}
&= \beta \frac {\int_1^s g_{\operatorname{smooth} }(\tilde s) d\tilde s } {s} +g_{\operatorname{smooth} }(s).
\end{align*}
Then $I_{\operatorname{smooth} }$ is clearly positive, $C^{\infty}$ and satisfies \eqref{lem51eq0}. Furthermore
 $I_{\operatorname{smooth} }$ has at least $m_0$ local maxima since it inherits the local maxima of $I(\cdot)$ inside the interval $(1,\overline S_0)$ as long as the mollification is done away from the local maxima (note that $I_{\operatorname{smooth} }$ may generate some
 new maxima near the point $\overline S_0$ since mollification is done for the $g$ function). 
\end{rem*}

\begin{rem*}
We clarify a bit more in detail the ``suitable" mollification procedure for the $g$-function alluded to
in the previous remark. The main point is to preserve strict monotonicity whilst achieving $C^{\infty}$-smoothness. To be precise, assume $g$ is $C^{\infty}$ except at $s=\overline S_0$, $g^{\prime}<0$
for $s\ne\overline S_0$, and $g^{\prime}(s) <-c_0<0$ for $|s-\overline S_0|<2\delta_0$ ($\delta_0>0$
is sufficiently small). Choose a $C^{\infty}$-smooth radial decreasing bump function $\phi_0$ such that $\phi_0 (x)= \exp(- \frac  1 {1-x^2 } )$ for $\frac 23 <|x|<1$, $\phi_0(x)=1$ for $|x|<\frac 13$,
 and $\phi_0(x)=0$
for $|x|\ge 1$. Define
\begin{align*}
h(s) = g^{\prime}(s) \left(1- \phi_0( \frac {s-\overline S_0} {\delta_0} ) \right) -a\cdot \phi_0(\frac {s-\overline S_0}
{{\delta_0} } ),
\end{align*}
where the constant $a>0$ is chosen such that
\begin{align*}
\int_{\overline S_0-\delta_0}^{\overline S_0+\delta_0} h(s) ds = \int_{\overline S_0-\delta_0}^{\overline S_0+\delta_0}
g^{\prime}(s) ds.
\end{align*}
Define $c_1= \min\{ c_0, \, a\} >0$. Then clearly for all $|s-\overline S_0|\le \delta_0$, one has
\begin{align*}
h(s) < -c_1<0.
\end{align*}
Also $h(s) =g^{\prime}(s)$ for $|s-\overline S_0|>\delta_0$.  Define 
\begin{align*}
g_{\operatorname{smooth} }(s) = g(1) + \int_1^s h(\tilde s) d\tilde s.
\end{align*}
Then it is not difficult to check that $g_{\operatorname{smooth} }(s) = g(s) $ for $s<\overline S_0-\delta_0$ or
$s>\overline S_0+\delta_0$, and $g_{\operatorname{smooth} }^{\prime}<0$ for all $s$. Thus $g_{\operatorname{smooth} }$ is the desired $C^{\infty}$ function.
\end{rem*}
\begin{rem*}
The basic idea of the construction of the function $I(s)$ is as follows.  First if we ignore the constraint of having
$m_0$ local maxima, we can choose $I(s)$ piece-wisely as
\begin{align*}
I(s)=
\begin{cases}
1, \quad 1\le s<\overline S_0, \\
(\frac {\overline S_0} s )^{\theta}, \quad s>\overline S_0,
\end{cases}
\end{align*}
where $0<\theta\le 1$.  It is easy to check that the inequality \eqref{lem51eq0}  holds for $s<\overline S_0$. For
$s>\overline S_0$, one observes that
\begin{align*}
s^{-\beta} \int_1^s I(\tilde s) \tilde s^{\beta} d\tilde s 
&= s^{-\beta} \int_1^{\overline S_0} \tilde s^{\beta} d\tilde s + s^{-\beta}
\int_{\overline S_0}^s \tilde s^{\beta} \cdot (\frac {\overline S_0} {\tilde s} )^{\theta} d \tilde s \notag \\
&= s^{-\beta}\left( \frac{\overline S_0^{\beta+1}-1} {\beta+1}-
\frac {\overline S_0^{\beta+1}}{\beta-\theta+1} \right) + s^{1-\theta} \frac {\overline S_0^{\theta}} {\beta-\theta+1}.
\end{align*}
This then clearly yields \eqref{lem51eq0} thanks to the condition $0<\theta\le 1$.  Now
to finish the construction we can use an ``$\epsilon$-perturbation" idea to create $m_0$
local maxima in the range $1\le s<\overline S_0$. The mollification then shows that one has
at least $m_0$ local maxima. 

\end{rem*}
\begin{proof}
Define 
\begin{align*}
I(s)=
\begin{cases}
1+\epsilon \cdot e^{-s} \sin s, \quad 1\le s\le \overline S_0; \\
\alpha_1 (\frac {\overline S_0}s)^{\theta}, \quad s>\overline S_0,
\end{cases}
\end{align*}
where $\alpha_1= 1+\epsilon e^{-\overline S_0} \sin \overline S_0$,
and the parameters $(\epsilon,\overline S_0)$ will be specified momentarily. It will
become clear from the description below that  we obtain a family of functions. Furthermore 
 $I(s)$ is piecewise $C^{\infty}$  and by using mollification (see the Remarks before)
 $I(s)$ can be made arbitrarily smooth.

First it is easy to check that the function $e^{-s} \sin s$ has strict local maxima at the points
$s= (n+\frac 14) \pi$, $n=0,2,4,\cdots$. We can take $\overline S_0= (2(m_0-1)+\frac 14+\eta_0)\pi$
with $\eta_0>0$ being  sufficiently small.  This obviously guarantees that $I(\cdot)$ admits precisely
$m_0$ strict local maxima located inside the interval $(1,\overline S_0)$. 
%Furthermore $I(\cdot)$ is strictly
%monotonically decreasing on the interval $(\overline S_0-\eta_0, \infty)$.  Note that this property guarantees
%that when we mollify $I(\cdot)$ around $s=\overline S_0$ there  does not appear new local maxima?

Next observe that in \eqref{lem51eq0} if we plug in the main order $I(s)\equiv 1$ we
obtain the value $-\beta$. Thus by choosing $0<\epsilon<1$ sufficiently small ($\epsilon$
can depend on $\overline S_0$ and $\beta$) one can guarantee that the LHS of \eqref{lem51eq0} is always
less than $-\beta/2$ for $1\le s< \overline S_0$.

Finally we need to check \eqref{lem51eq0} for $\overline S_0<s<\infty$.  Clearly
\begin{align*}
s^{-\beta} \int_1^s I(\tilde s) \tilde s^{\beta} d\tilde s 
&= s^{-\beta} \int_1^{\overline S_0} \tilde s^{\beta} d\tilde s + 
s^{-\beta} \int_1^{\overline S_0} \tilde s^{\beta}
\cdot \epsilon e^{-\tilde s} \sin \tilde s d \tilde s+
\alpha_1 s^{-\beta}
\int_{\overline S_0}^s \tilde s^{\beta} \cdot (\frac {\overline S_0} {\tilde s} )^{\theta} d \tilde s \notag \\
&= s^{-\beta}\left( \frac{\overline S_0^{\beta+1}-1} {\beta+1}-\alpha_1
\frac {\overline S_0^{\beta+1}}{\beta-\theta+1} 
+C_1\cdot \epsilon\right) + \alpha_1 s^{1-\theta} \frac {\overline S_0^{\theta}} {\beta-\theta+1},
\end{align*}
where $C_1= \int_1^{\overline S_0} \tilde s^{\beta} e^{-\tilde s} \sin \tilde s d\tilde s$. 
By choosing $\epsilon<\frac 1 {2(\beta+1)} \cdot \frac 1 {|C_1|+1}$, one can clearly
fulfill \eqref{lem51eq0} for $\overline S_0<s<\infty$. Furthermore the LHS of \eqref{lem51eq0}
is bounded above by some uniform negative constant for $s$ close to $\overline S_0$.

\end{proof}

Case 4b: More refined results: existence of arbitrarily finitely many peaks with precise control
of the location of maxima. 

\begin{lem} \label{lem52}
Let $\beta>0$ and $1<\overline S_0<\infty$.  There exists $\theta_0=\theta_0(\beta,\overline S_0) 
\in (0,1)$, such that for any $0<\theta<\theta_0$, there exists 
$f:\, [1,\infty)\to (0,\infty)$ satisfying the following properties:

\begin{itemize}
\item $f\in C^{\infty}$, 
 and $|f(s)| \lesssim s^{-\theta}$, $|f^{\prime}(s)| \lesssim s^{-\theta-1}$ for all $s$. 

\item For some $0<\eta_0<1$ ($\eta_0>0$ is a tunable parameter which can be
made as small as possible), one has $f(s)\equiv 1$ for $1\le s \le \overline S_0-\eta_0$,
and $f^{\prime}(s)<0$ for all $s>\overline S_0-\eta_0$. 

\item  There exists a constant $c_1>0$, such that for all $1\le s <\infty$,
\begin{align} \label{lem52e1}
\left(s^{-\beta} \int_1^s f(\tilde s )\tilde s^{\beta} d\tilde s\right)^{\prime\prime}
+ c_1 s^{-\theta-1} <0.
\end{align}

\end{itemize}

\end{lem}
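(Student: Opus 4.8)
The plan is to exhibit $f$ explicitly as a concatenation of a constant plateau, a short strictly decreasing ``bridge'', and a power tail, and then verify the differential inequality piece by piece; all the real work is in balancing a handful of small parameters.

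Concretely, fix small parameters $\eta_0>0$ and $\delta:=1-\alpha>0$ (with $0<\alpha<1$), and a $C^{\infty}$ ramp $\Psi:[\overline S_0-\eta_0,\overline S_0]\to[0,1]$ that is flat to infinite order at both endpoints, with $\Psi(\overline S_0-\eta_0)=0$, $\Psi(\overline S_0)=1$, and $\Psi'>0$ on the open interval. Set
\begin{align*}
f(s)=
\begin{cases}
1, & 1\le s\le \overline S_0-\eta_0,\\
1-\Psi(s)\bigl(1-\alpha(\overline S_0/s)^{\theta}\bigr), & \overline S_0-\eta_0\le s\le \overline S_0,\\
\alpha(\overline S_0/s)^{\theta}, & s\ge \overline S_0.
\end{cases}
\end{align*}
Because $f-1=-\Psi\bigl(1-\alpha(\overline S_0/\cdot)^\theta\bigr)$ and $f-\alpha(\overline S_0/\cdot)^\theta=(1-\Psi)\bigl(1-\alpha(\overline S_0/\cdot)^\theta\bigr)$ both vanish to infinite order at the two gluing points, $f\in C^{\infty}([1,\infty))$. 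Choosing $\delta$ a little larger than $\theta\eta_0/\overline S_0$ makes $\alpha(\overline S_0/s)^\theta<1$ throughout the bridge, whence there $f'=-\Psi'\bigl(1-\alpha(\overline S_0/s)^\theta\bigr)+\Psi\cdot\tfrac{d}{ds}\bigl(\alpha(\overline S_0/s)^\theta\bigr)$ is a sum of a nonpositive and a strictly negative term (using $\Psi>0$ on the open interval), so $f'<0$ on all of $(\overline S_0-\eta_0,\infty)$; the bounds $|f|\lesssim s^{-\theta}$, $|f'|\lesssim s^{-\theta-1}$ follow at once once in addition $\delta\lesssim\eta_0$, which keeps $|f'|=O(1)$ on the bridge.

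For the differential inequality, write $\Phi(s)=s^{-\beta}\int_1^s f(\tilde s)\tilde s^\beta\,d\tilde s$, so by the algebra of Remark~\ref{rem_gf} one has $\Phi''=\beta(\beta+1)s^{-\beta-2}\int_1^s f\tilde s^\beta\,d\tilde s-\beta s^{-1}f+f'$. On the plateau $f\equiv1$ gives $\Phi''=-\beta s^{-\beta-2}$, so $\Phi''+c_1 s^{-\theta-1}<0$ holds once $c_1<\beta\overline S_0^{\theta-\beta-1}$. On the tail the computation parallels the one in the proof of Proposition~\ref{lem51}: one gets $\Phi''=\beta(\beta+1)\tilde A\,s^{-\beta-2}-\tfrac{\alpha\overline S_0^\theta\theta(1-\theta)}{\beta-\theta+1}\,s^{-\theta-1}$ with $\tilde A=\int_1^{\overline S_0}f\tilde s^\beta\,d\tilde s-\tfrac{\alpha\overline S_0^{\beta+1}}{\beta-\theta+1}$, and $\tilde A<0$ because $\int_1^{\overline S_0}f\tilde s^\beta\,d\tilde s\le\tfrac{\overline S_0^{\beta+1}-1}{\beta+1}<\tfrac{\alpha\overline S_0^{\beta+1}}{\beta-\theta+1}$ as soon as $\delta<\overline S_0^{-\beta-1}$; since $0<\theta<1$ both terms are negative, and $c_1\le\tfrac{\alpha\overline S_0^\theta\theta(1-\theta)}{\beta-\theta+1}$ suffices. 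The only genuinely delicate region is the bridge: splitting $\int_1^s f\tilde s^\beta=\int_1^{\overline S_0-\eta_0}\tilde s^\beta+\int_{\overline S_0-\eta_0}^s f\tilde s^\beta$, using $f\le1$ in the first summand and $f\ge\alpha$ in the $-\beta s^{-1}f$ term, and discarding the nonpositive $f'$, one obtains $\Phi''(s)\le\beta s^{-1}(1-\alpha)-\beta s^{-\beta-2}\le\beta(\overline S_0-\eta_0)^{-1}\delta-\beta\overline S_0^{-\beta-2}$, which is $<-c_1 s^{-\theta-1}$ provided $\delta$ and $c_1$ are small compared to the fixed positive number $\beta\overline S_0^{-\beta-2}$.

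It remains to fix the hierarchy of constants: take $\theta_0=\theta_0(\beta,\overline S_0)\in(0,1)$ small, then (for $0<\theta<\theta_0$) $\eta_0$ small, then $\delta=1-\alpha$ with $\theta\eta_0/\overline S_0\lesssim\delta\lesssim\min\{\eta_0,\ \overline S_0^{-\beta-1},\ \overline S_0^{-\beta-2}(\overline S_0-\eta_0)\}$, and finally $c_1>0$ below the minimum of the three thresholds above; these are mutually compatible once $\eta_0$ is small enough. The main obstacle I anticipate is precisely this bridge bookkeeping: one must force $f$ to start decreasing exactly at $\overline S_0-\eta_0$ with $C^\infty$ matching, keep $|f'|\lesssim s^{-\theta-1}$, and simultaneously hold the ``bad'' term $\beta s^{-1}(1-f)$ in $\Phi''$ below the available budget $\beta\overline S_0^{-\beta-2}$ — which is what forces the total drop $1-\alpha$ across the bridge to be small; once that is arranged, the remaining estimates are the explicit differentiations already modelled in Proposition~\ref{lem51}.
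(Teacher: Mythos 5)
Your construction is correct, and it departs from the paper's in how the junction at $\overline S_0$ is handled. The paper sets $f_0\equiv 1$ on $[1,\overline S_0]$ and uses the two-term tail $\tfrac{1}{1-\theta}\bigl((\overline S_0/\tau)^\theta-\theta\,\overline S_0/\tau\bigr)$ for $\tau>\overline S_0$, the coefficients being chosen precisely so that $f_0$ is $C^1$ with $f_0'(\overline S_0)=0$; it then mollifies $f_0'$ near $\overline S_0$ with a bump $\phi_0$ and re-verifies \eqref{lem52e1} perturbatively via $|f-f_0|=O(\eta_0^2)$, $|f'-f_0'|=O(\eta_0)$. You instead take the simpler one-term tail $\alpha(\overline S_0/s)^\theta$ with a small value drop $\delta=1-\alpha$, glue it to the plateau with an explicit $C^\infty$ ramp $\Psi$, and verify \eqref{lem52e1} directly on each of the three regions with no post-hoc perturbation step. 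Each route buys something: the paper's $C^1$ matching makes the bridge estimate nearly automatic (the perturbation is genuinely small), while yours makes the final function fully explicit and avoids the mollification-stability argument at the cost of the $\delta$ bookkeeping. Your bookkeeping is consistent: $\delta$ must exceed roughly $\theta\eta_0/\overline S_0$ so that $\alpha(\overline S_0/s)^\theta\le 1$ on the bridge (which is needed both for $f'\le 0$ there and to discard $f'$ in the $\Phi''$ bound), and must sit below $\overline S_0^{-\beta-1}$ (so $\tilde A<0$ on the tail) and below a multiple of $\overline S_0^{-\beta-2}$ (so the bad term $\beta s^{-1}\delta$ stays under budget on the bridge); these are compatible once $\theta_0$ is fixed small depending only on $\beta,\overline S_0$, with $\eta_0$ then free to be as small as desired, which matches the statement's ``tunable $\eta_0$''. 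The remaining verifications (concavity from $\tilde A<0$ on the tail, the exact $\Phi''=-\beta s^{-\beta-2}$ on the plateau, and the $|f|\lesssim s^{-\theta}$, $|f'|\lesssim s^{-\theta-1}$ decay) are all correct.
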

\begin{proof}
We first define a $C^1$-function as:
\begin{align*}
f_0(\tau)= \begin{cases}
1, \quad 1\le \tau \le \overline S_0,\\
\frac 1 {1-\theta} ( ( \frac {\overline S_0} {\tau} )^{\theta} - \theta \frac {\overline S_0} {\tau} ),
\quad \tau > \overline S_0,
\end{cases}
\end{align*}
where $\theta \in (0,1)$ will be taken sufficiently small.  For \eqref{lem52e1}
the regime $s\le \overline S_0$ is
easy to check. For $s>\overline S_0$, we have
\begin{align*}
&s^{-\beta} \int_1^s f_0(\tilde s )\tilde s^{\beta} d\tilde s \notag \\
=&\; s^{-\beta} \left( \frac {\overline S_0^{\beta+1}-1} {\beta+1}
- \frac 1 {1-\theta} \cdot \frac 1 {\beta-\theta+1} \overline S_0^{\beta+1} 
+ \frac {\theta}{1-\theta} \cdot \frac 1 {\beta} \overline S_0^{\beta+1} \right)   \notag \\
& \quad + \frac 1 {1-\theta} \cdot \frac 1 {\beta-\theta+1} \cdot \overline S_0^{\theta}
\cdot s^{1-\theta}  - \frac {\theta}{1-\theta} \cdot \overline S_0 \cdot \frac 1 {\beta}.
\end{align*}
Now note that if we take $\theta$ such that
\begin{align*}
\frac {\theta}{1-\theta} \cdot \frac 1 {\beta} \overline S_0^{\beta+1} \le \frac 1 {\beta+1},
\end{align*}
then we obtain
\begin{align*}
&s^{-\beta} \int_1^s f_0(\tilde s )\tilde s^{\beta} d\tilde s \notag \\
=&\;  -A_1 s^{-\beta} +A_2 s^{1-\theta} - A_3,
\end{align*}
where $A_i>0$ are constants. This clearly yields a concave function
satisfying \eqref{lem52e1}.

Now to finish the construction we just need to mollify $f_0$ around $\overline S_0$ whilst keeping
the constraint \eqref{lem52e1} (with a smaller constant $c_1$ if necessary). Let
$\phi_0\in C_c^{\infty}(\mathbb R)$ be radial decreasing such that $\phi_0 (x)= \exp(- \frac  1 {1-x^2 } )$ for $\frac 23 <|x|<1$, $\phi_0(x)=1$ for $|x|<\frac 13$,
 and $\phi_0(x)=0$
for $|x|\ge 1$.

Take small constant $\eta_0>0$ and define
\begin{align*}
h(s) = f^{\prime}_0(s) \left(1-\phi_0(\frac {s-\overline S_0} {\eta_0} ) \right) -a \phi_0(\frac{s-\overline S_0}{\eta_0}),
\end{align*}
where $a>0$ satisfies
\begin{align*}
a \int \phi_0(x) dx =- \int f_0^{\prime}(\overline S_0+\eta_0 x)  \phi_0(x) dx.
\end{align*}
Observe that $h(s)<0$ for all $s>\overline S_0-\eta_0$. 
Define $f(s)= f_0(1) + \int_1^s h(\tilde s) d\tilde s$. Clearly $f(s)=f_0(s)$,
$f^{\prime}(s)=f_0^{\prime}(s)$ for $|s-\overline S_0|>\eta_0$. For $|s-\overline S_0|\le \eta_0$,
we have $|f^{\prime}(s)-f_0^{\prime}(s)| =O(\eta_0)$, $|f(s)-f_0(s)|=O(\eta_0^2)$. 
Then for $|s-\overline S_0|<\eta_0$, we have
\begin{align*}
&  \left(s^{-\beta} \int_1^s f(\tilde s ) \tilde s^{\beta} d\tilde s\right)^{\prime\prime} \notag \\
=&\;
\beta(\beta+1) s^{-\beta-2}\int_1^s f(\tilde s) \tilde s^{\beta} d\tilde s
-\beta s^{-1} f(s) + f^{\prime}(s) \notag \\
=& \;
\beta(\beta+1) s^{-\beta-2}\int_1^s f_0(\tilde s) \tilde s^{\beta} d\tilde s
-\beta s^{-1} f_0(s) + f^{\prime}_0(s) +O(\eta_0).
\end{align*}
By further shrinking $\eta_0$ it is then easy to fulfill the constraint \eqref{lem52e1}.
\end{proof}
\begin{rem*}
Alternatively, one can use the usual convolution to define (a slightly different version of ) $f(s)$ as follows:
first extend $f_0(s)$ to $\mathbb R $ such that $f_0(s)=1$ for $s\le 0$. Then for $\eta_0>0$
 let $\phi_{\eta_0}(x)=\eta_0^{-1} \phi_0(x/\eta_0)/\|\phi_0\|_1$. 
Define $f$ as the restriction of the convolution $f_0*\phi_{\eta_0}$ to the axis $[1,\infty)$. 
Note that $f\equiv 1$ on $[1,\overline S_0-\eta_0]$ and $f^{\prime}<0$ on $(\overline S_0-\eta_0,\infty)$. The
estimates
\begin{align*}
 & |f(s)-f_0(s)|\lesssim s^{-1-\theta} \cdot \eta_0, \quad\forall\, s, \\
 & |f^{\prime}(s) -f_0^{\prime}(s)| \lesssim s^{-2-\theta} \cdot \eta_0, \quad
 \forall\, s >\overline S_0+\eta_0, \\
 & |f^{\prime}(s)| \lesssim \eta_0, \quad \forall\, \overline S_0-\eta_0\le s \le \overline S_0+\eta_0,
\end{align*}
then yield the result.
\end{rem*}

The following is a strengthened version of Proposition \ref{lem51}.

\begin{prop}[Finitely many peaks,  precise control] \label{prop53}
Let $\beta> 0$. For any $1<M_1<M_2<\infty$, any integer $m_0\ge 1$, 
and any $m_0$ different points $\{s_j\}_{j=1}^{m_0}$ (WLOG $s_1<s_2<\cdots
<s_{m_0}$) in the interval
$(M_1,M_2)$, one can find a $C^{\infty}$-smooth function
$I=I(s): \, [1,\infty) \to (0,\infty)$ such that the following hold:

\begin{itemize}
\item For all $1\le s<\infty$, 
\begin{align} 
&  \left(s^{-\beta} \int_1^s I(\tilde s ) \tilde s^{\beta} d\tilde s\right)^{\prime\prime} \notag \\
=&\;
\beta(\beta+1) s^{-\beta-2}\int_1^s I(\tilde s) \tilde s^{\beta} d\tilde s
-\beta s^{-1} I(s) + I^{\prime}(s)  <0. \label{prop53e0}
\end{align}
\item  For some $0<\theta<1$, $I(s) \lesssim s^{-\theta}$, $|I^{\prime}(s)|
\lesssim s^{-1-\theta}$, for all $s\ge 1$.
\item  $I(s)$ has  $m_0$ strict local maxima located
at the points $s_1$, $s_2$, $\cdots$, $s_{m_0}$, such that
$I^{\prime}(s_j)=0$, $I^{\prime\prime}(s_j)<0$ for any $1\le j \le m_0$. 
\item $I(s)$ has no other peaks besides the points $\{s_j\}_{j=1}^{m_0}$. More precisely,
there are points $a_1<b_1<a_2<b_2<\cdots<a_{m_0}<b_{m_0}<a_{m_0+1}$, such that
\begin{itemize}
\item $I(s)=1$ for $1\le s\le a_1$, $b_j\le s\le a_{j+1}$ ($j=1,\cdots,m_0$);
\item $I(a_{m_0+1})=1$ and $I^{\prime}(s)<0$ on $(a_{m_0+1}, \infty)$;
\item On each interval $(a_j, b_j)$, $I(s)$ has a peak at $s=s_j$, and $I^{\prime}(s)
>0$ for $a_j<s<s_j$, $I^{\prime}(s)<0$ for $s_j<s<b_j$. 
\end{itemize}
\end{itemize}
\end{prop}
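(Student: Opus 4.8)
The plan is to use the smooth, monotone‑tailed profile $f$ produced by Lemma~\ref{lem52} as a \emph{backbone}, and to graft onto its flat part $m_0$ disjoint, compactly supported smooth bumps of small amplitude $\epsilon$, one centered at each prescribed point $s_j$. Lemma~\ref{lem52} already supplies the inequality \eqref{lem52e1} with a quantitative reserve (the margin $c_1 s^{-\theta-1}$), and on the flat region the left‑hand side of \eqref{prop53e0} equals the genuinely negative quantity $-\beta s^{-\beta-2}$; so there is room to absorb the perturbations caused by the bumps once $\epsilon$ is small. The step I expect to be the main obstacle is reconciling the \emph{nonlocal} term $\beta(\beta+1)s^{-\beta-2}\int_1^s I\,\tilde s^{\beta}\,d\tilde s$ with this purely local surgery: a bump changes the ``history'' integral for \emph{all} later $s$, and one must check this does not spoil \eqref{prop53e0} on the tail.

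For the setup, first fix $\overline S_0>M_2$ and apply Lemma~\ref{lem52} with this $\overline S_0$; this produces $\theta_0=\theta_0(\beta,\overline S_0)\in(0,1)$ and, for any $0<\theta<\theta_0$, a function $f\in C^{\infty}([1,\infty))$, $f>0$, with $f\equiv1$ on $[1,\overline S_0-\eta_0]$, $f'<0$ on $(\overline S_0-\eta_0,\infty)$, $f(s)\lesssim s^{-\theta}$, $|f'(s)|\lesssim s^{-\theta-1}$, and $\bigl(s^{-\beta}\int_1^s f\,\tilde s^{\beta}\,d\tilde s\bigr)''+c_1 s^{-\theta-1}<0$ for all $s\ge1$; shrinking $\eta_0$ we may assume $\overline S_0-\eta_0>M_2$. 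Choose $\delta>0$ so small that the intervals $(a_j,b_j):=(s_j-\delta,s_j+\delta)$ are pairwise disjoint and contained in $(M_1,M_2)$, and set $a_{m_0+1}:=\overline S_0-\eta_0$. Let $\rho\in C_c^{\infty}((0,1))$ be nonnegative with a unique nondegenerate maximum at $x=\tfrac12$ (e.g. $\rho(x)=\exp(-1/(x(1-x)))$ on $(0,1)$, extended by $0$), and put $\psi_j(s):=\rho\bigl(\tfrac{s-a_j}{2\delta}\bigr)$, so $\psi_j\in C_c^{\infty}((a_j,b_j))$, $\psi_j\ge0$, $\psi_j'>0$ on $(a_j,s_j)$, $\psi_j'<0$ on $(s_j,b_j)$, $\psi_j'(s_j)=0$, $\psi_j''(s_j)<0$. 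Finally define
\begin{align*}
I(s):=f(s)+\epsilon\sum_{j=1}^{m_0}\psi_j(s),
\end{align*}
with $\epsilon>0$ to be fixed small. Then $I\in C^{\infty}$, $I>0$, $I\equiv1$ on $[1,a_1]$ and on each $[b_j,a_{j+1}]$, while $I=f$ on $[a_{m_0+1},\infty)$, so $I(a_{m_0+1})=1$ and $I'<0$ on $(a_{m_0+1},\infty)$.

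Next I would verify \eqref{prop53e0}. Writing $L[I](s)$ for the left‑hand side, linearity in $I$ of the two $I$‑dependent terms gives
\begin{align*}
L[I](s)=L[f](s)+\epsilon\sum_{j=1}^{m_0}\Bigl(\beta(\beta+1)s^{-\beta-2}\!\int_1^s\psi_j(\tilde s)\,\tilde s^{\beta}\,d\tilde s-\beta s^{-1}\psi_j(s)+\psi_j'(s)\Bigr).
\end{align*}
On the compact range $1\le s\le a_{m_0+1}$ we have $f\equiv1$, hence $L[f](s)=-\beta s^{-\beta-2}\le-\beta\,a_{m_0+1}^{-\beta-2}<0$; each bracketed bump term is bounded on $[1,a_{m_0+1}]$ uniformly in $\epsilon$, so taking $\epsilon$ small makes $L[I](s)<0$ there. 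On the tail $s>a_{m_0+1}$ the $\psi_j$ vanish, so only $\epsilon\,\beta(\beta+1)s^{-\beta-2}\sum_j\int_1^{\infty}\psi_j\,\tilde s^{\beta}\,d\tilde s=O(\epsilon)\,s^{-\beta-2}$ survives as a perturbation (the total bump mass is finite and supported in $(M_1,M_2)$). Since $\beta+2>\theta+1$ we have $s^{-\beta-2}\le s^{-\theta-1}$ for $s\ge1$, so this perturbation is $\le C\epsilon\,s^{-\theta-1}$; choosing $\epsilon<c_1/(2C)$ and invoking \eqref{lem52e1} yields $L[I](s)\le-\tfrac12 c_1 s^{-\theta-1}<0$ on the tail. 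Thus \eqref{prop53e0} holds for all $s\ge1$.

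It remains to record the remaining conclusions, which are routine. The bounds $I\lesssim s^{-\theta}$, $|I'|\lesssim s^{-\theta-1}$ follow from those for $f$ together with $I-f$ being $O(\epsilon)$ and supported in the compact set $[M_1,M_2]$. For the peak structure: on $(a_j,b_j)$ we have $I=1+\epsilon\psi_j$, so $I'=\epsilon\psi_j'$ is positive on $(a_j,s_j)$, negative on $(s_j,b_j)$, with $I'(s_j)=0$ and $I''(s_j)=\epsilon\psi_j''(s_j)<0$; on $[1,a_1]$ and on each $[b_j,a_{j+1}]$ one has $I\equiv1$ (no strict local maximum), and on $(a_{m_0+1},\infty)$ one has $I'=f'<0$ (no local maximum). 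Hence $s_1,\dots,s_{m_0}$ are exactly the peaks of $I$, with the claimed sign pattern of $I'$ on $(a_j,b_j)$, completing the proof. The only genuinely delicate point — and the reason Lemma~\ref{lem52} is phrased with the explicit margin $c_1 s^{-\theta-1}$ rather than a bare strict inequality — is, as flagged above, the absorption of the $O(\epsilon)s^{-\beta-2}$ tail contribution of the localized bumps by this reserved slack; everything else mirrors the bookkeeping already carried out in Proposition~\ref{lem51} and Lemma~\ref{lem52}.
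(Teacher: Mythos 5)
Your proof is correct and follows essentially the same strategy as the paper: take the monotone‑tailed profile $f$ from Lemma~\ref{lem52} as a backbone, graft $m_0$ disjoint small smooth bumps centered at the prescribed $s_j$ onto its flat region, and absorb the resulting perturbation of the nonlocal inequality using the explicit margin $c_1 s^{-\theta-1}$ built into \eqref{lem52e1}. The only cosmetic differences are that the paper ties bump amplitude to bump width through a single parameter ($\delta_0^3\phi_1((s-s_j)/\delta_0)$, with $\overline S_0=M_2$) and leaves the verification of \eqref{prop53e0} as a routine check, whereas you decouple the amplitude $\epsilon$ from the width $\delta$, take $\overline S_0>M_2$, and spell out the absorption argument explicitly.
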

\begin{proof}
We first let $\overline S_0=M_2$ and choose $f(s)$ by using Lemma \ref{lem52}. Take a
radial decreasing bump function\footnote{Note that here we choose the
bump function $\phi_1$ to have a ``strict" peak instead of a flat maximum at
the center.}
$\phi_1\in C_c^{\infty}(\mathbb R)$ with $\phi_1(x)= e^{-\frac 1 {1-x^2}}$ for $|x|<1$,
$\phi_1(x)=0$ for $|x|>1$. 
Define
\begin{align*}
I(s) = f(s) + \sum_{j=1}^{m_0} \delta_0^3 \phi_1( \frac {s-s_j} {\delta_0} ),
\end{align*}
where $\delta_0 \ll \min_{0\le j\le m_0} |s_{j+1}-s_j|$ (here $s_0=M_1$, 
$s_{m_0+1}=M_2-\eta_0$ where $\eta_0$ is in Lemma \ref{lem52}) 
 will be taken sufficiently small (such that 
  \eqref{prop53e0} is  satisfied).  For $1\le j\le m_0$, define
 $a_j= s_j -\delta_0$, $b_j=s_j+\delta_0$. Define $a_{m_0+1}=M_2-\eta_0$. 
It is then not difficult to check that $I(s)$ fulfills the desired properties.
\end{proof}
\begin{rem} \label{rem_infty_1}
It is even possible to modify slightly the above construction and
 construct a function $I(s)$ having infinitely many peaks within $(M_1, M_2)$. Take
 a sequence of increasing points $s_j \to s_{\infty} \in (M_1,M_2)$.  Define 
 \begin{align*}
 I(s) = f(s) + \sum_{j=1}^{\infty} c_j \phi_1(\frac {s-s_j} {\delta_j} ),
 \end{align*}
where $0<\delta_j \ll \min\{s_{j+1}-s_j, \, s_j-s_{j-1},\, 2^{-j} \}$ for all $j$ (so that bubbles stay
disjoint) and $c_j=e^{-2/{\delta_j}}$ (one may also need to shrink
$\delta_j$ further so that \eqref{prop53e0} is satisfied). Easy to check that for any integer $k\ge 0$,
\begin{align*}
\sum_{j} c_j  \delta_j^{-k} \lesssim \sum_{j} k! \cdot e^{-\frac 1 {\delta j} } <\infty.
\end{align*}
Thus $I$ is $C^{\infty}$-smooth which has infinitely (small) peaks located at the points
$s_j$.
\end{rem}

\begin{rem*}
Another natural idea is to look for a function $I(s)$ possessing infinitely many plateau segments
emanating to infinity  (from which one can create infinitely many peaks in each plateau
such that  the formed peaks will have  positive distances from
each other). However one can \emph{disprove} such a possibility. More precisely, recall
that 
\begin{align*}
I(s) = \beta \cdot \frac 1 {s} \int_1^s g(\tilde s) d\tilde s + g(s), \quad\forall\, s\ge 1,
\end{align*}
and $g>0$ monotonically decrease to zero as $s$ tends to infinity. 

Claim: The function $I(s) $ cannot have infinitely many intervals $[a_n,b_n]$ with $a_n, \,b_n \to \infty$ such
that $I (s) \equiv c_n$ for some constant $c_n$ on the interval $[a_n, b_n]$. 

\begin{proof}[Proof of Claim.]
Argue by contradiction. Suppose such $I(s)$ exists. Then on each $[a_n,b_n]$, we have
\begin{align*}
( s^{\beta} \int_1^s g(\tilde s) d\tilde s)^{\prime} = c_n s^{\beta}.
\end{align*}
Solving this on $[a_n,b_n]$ then gives
\begin{align*}
&\int_1^s g(\tilde s) d\tilde s = \frac {c_n}{\beta+1} s -d_n s^{-\beta}, \quad s\in [a_n,b_n];\\
&g(s) = \frac {c_n} {\beta+1} + \beta d_n s^{-\beta-1}, \quad s\in [a_n,b_n],
\end{align*}
where $d_n\ge 0$ (since $g$ is non-increasing). It follows that
\begin{align*}
\frac 1s \int_1^{s} g(\tilde s) d\tilde s \le g(s), \quad \text{for $s=a_n$, $n\ge 1$}.
\end{align*}
This is impossible since there exists some number $\alpha_0>0$, such that
\begin{align} \label{egs_alpha0}
\frac 1 s \int_1^s g(\tilde s) d\tilde s >g(s), \quad \forall\, s>\alpha_0.
\end{align}
This last inequality can be easily proved by an $\epsilon$-$\delta$ argument using the monotonicity of
$g$ and the fact that $g$ tends to zero as $s \to \infty$. 
\end{proof}
\end{rem*}
\begin{rem*}
Inspired by the preceding remark, one can prove a stronger result which asserts that
the function $I(s)$ must be strictly monotonically decreasing near infinity. Thus the construction in Remark \ref{rem_infty_1} is in some sense the best possible since one cannot have peaks
propagating to infinity.

Claim: There exists $\alpha_0>0$, such that $I^{\prime}(s)<0$ for all $s>\alpha_0$.

\begin{proof}[Proof of Claim]
This follows directly from the identity 
\begin{align*}
I^{\prime}(s)= g^{\prime}(s)+ \frac {\beta} s \left(  -\frac 1 {s} \int_1^s g(\tilde s)
d\tilde s + g(s) \right).
\end{align*}
and the inequality \eqref{egs_alpha0}.
\end{proof}

\end{rem*}

\begin{rem*}
From the point of view of epidemic modeling,  one should perhaps not expect infinitely many
(relatively) large outbreaks which is naturally excluded in our nonlocal model.
\end{rem*}

The above concludes our discussion on the properties and relations of $I(s)$ and the kernel
$g(s)$. From a practical point of view, one can collect data for (suitably normalized infected 
population) $I(s)$, and reconstruct the approximate kernel $g(s)$. In this way one can build
a nonlocal model to make future predictions of possible outbreaks.

\subsection*{Concluding Remarks}
In this work we analyzed the classic SIR model and its nonlocal variant recently introduced 
in \cite{Guan}. The latter is a hybrid differential-integral model incorporating general probability
transition kernels as parameters. We discussed two approaches. One is based on quantifying the explicit relation of the infected population versus the susceptible population. The basic observation
is that the susceptible population strictly decreases in time which makes it a natural time arrow. 
In the classic SIR model it is possible to give almost explicit and analytic expressions
of the infected population in terms of the susceptible population. In the nonlocal SIR model one
can use an approximation of the kernel to derive the explicit functional relationship between
the infected population and the susceptible population.   In our second approach, we introduced
a nonlocal time parameter whose differential is the reciprocal of the infected population. 
Under this nonlocal stretching of the time axis, the classic SIR model becomes completely linear
and the dynamics can be easily classified. The functional relation between the new time arrow and
the old time can be explicitly worked out by using the classification result obtained in the first
step. Exploiting a similar idea we analyzed the nonlocal
SIR model and discovered several novel peak solutions.  Compared with the classic SIR model an interesting new feature of the nonlocal SIR model is the appearance of solutions having multiple
peaks. The stability/instability of such solutions as well as the classification of general asymptotics of
such solutions are presently not known and these seem to be an interesting new direction. Another fundamental issue left unaddressed
in this work is the notion of the reproduction number in general nonlocal epidemic 
models. Identifying the correct generalization of the reproduction number as well as building the natural connection between the nonlocal and the classic epidemic models seem to be a challenging task. We hope to address some of these issues in future works.

\bibliographystyle{abbrv}

\bigskip

\noindent L. Guan

\noindent Mathematics Department, Tulane University  \\
New Orleans, LA 70118, USA\\[1mm]
Email:  \textsf{lguan@tulane.edu}

\bigskip

\noindent D. Li

\noindent Department of Mathematics, The Hong Kong University of Science and Technology\\
Clear Water Bay, Kowloon, Hong Kong\\[1mm]
Email: \textsf{madli@ust.hk}

\bigskip

\noindent K. Wang

\noindent Department of Mathematics, The Hong Kong University of Science and Technology\\
Clear Water Bay, Kowloon, Hong Kong\\[1mm]
Email: \textsf{kewang@ust.hk}

\bigskip

\noindent K. Zhao

\noindent Mathematics Department, Tulane University\\
New Orleans, LA 70118, USA\\[1mm]
Email:  \textsf{kzhao@tulane.edu}

\end{document}